\def\be{\begin{array}}
\def\en{\end{array}}
\numberwithin{equation}{section}
\newtheorem{theorem}{Theorem}[section] 
\newtheorem{definition}{Definition}[section]
\newtheorem{coro}{Corollary}[section]
\newtheorem{lemma}{Lemma}[section]
\newtheorem{remark}{Remark}[section]
\newtheorem{proposition}{Proposition}[section]
\newcommand{\R}{{\mathbb R}}
\newcommand{\pozhehao}{\kern0.3ex\rule[0.8ex]{1.5em}{0.095ex}\kern0.3ex}
\begin{document}
\title{Global well-posedness, stability and instability for the non-viscous Oldroyd-B model}
\author{
Weikui $\mbox{Ye}^{1}$\footnote{email: 904817751@qq.com}\\
 $^1\mbox{Institute}$ of Applied Physics and Computational Mathematics,\\
P.O. Box 8009, Beijing 100088, P. R. China\\}
\date{}
\maketitle

\begin{center}
\begin{minipage}{15.5cm}

{\bf Abstract.} In this paper we consider the 3-dimensional incompressible Oldroyd-B model.
First, we establish two results of the global existence for different kinds of the coupling coefficient $k$. Then, we prove that the solutions $(u,\tau)$ are globally steady when $k^m\rightarrow k>0$, though $(u,\tau)$ corresponds to different decays for different kinds of $k>0~$. Finally, we show that the energy of $u(t,x)$ will have a jump when $k\rightarrow 0$ in large time, which implies a non-steady phenomenon. In a word, we find an interesting physical phenomenon of \eqref{1} such that smaller coupling coefficient $k$ will have a better impact for the energy dissipation of $(u,\tau)$, but $k$ can't be too small to zero, or the dissipation will vanish instantly. While the damping term $\tau$ and $\mathbb{D}u$ always bring the well impact for the energy dissipation.

\vskip4mm
 \noindent
{\bf Keywords:} Oldroyd-B model; global existence; stability; instability; decay; energy dissipation.\\

{\bf 2010 Mathematics Subject Classification.}  35Q35; 35A01; 35A02; 35B45; 76D05
\end{minipage}
\end{center}
\vskip 6mm

\section{Introduction and main results}
 \setcounter{equation}{0}
\par
In this paper, we study the incompressible Oldroyd-B model of the non-Newtonian fluid in $\R^+\times\R^d$
\begin{equation}\label{gs}
\begin{cases}
\partial_tu+(u\cdot\nabla)u-\nu \Delta u+\nabla p=k\mathrm{div}(\tau),\\
\partial_t\tau+(u\cdot\nabla)\tau-\eta\Delta\tau+\mu\tau+\mathrm{Q}(\nabla u,\tau)=\alpha\mathbb{D}u,\\
\mathrm{div} u=0,\\
u(x,0)=u_0(x),~~~\tau(0,x)=\tau_0(x),
\end{cases}
\end{equation}
where $u$ denotes the velocity, $\tau=\tau_{i,j}$ is the non-Newtonian part of the stress tensor($\tau$ is a $d\times d$ symmetric matrix here)
and $p$ is a scalar pressure of fluid.
 $\mathrm{D}(u)$ is the symmetric part of the velocity gradient,
 \begin{equation*}
   \mathrm{D}(u)=\frac{1}{2}(\nabla u+(\nabla u)^T).
 \end{equation*}
  The $\mathrm{Q}$ above is a given bilinear form:
\begin{equation*}
 \mathrm{Q}(\tau,\nabla u)=\tau\Omega(u)-\Omega(u)\tau+b(\mathrm{D}(u)\tau+\tau \mathrm{D}(u)),
\end{equation*}
where $b$ is a parameter in $[-1,1],$ $\Omega(u)$ is the skew-symmetric part of $\nabla u,$ i.e.
\begin{equation*}
  \Omega(u)=\frac{1}{2}(\nabla u-(\nabla u)^T).
\end{equation*}
The parameters $\nu,\eta,\mu,\alpha$ are non-negative and they are specific to the characteristic of
the considered material, $\nu$ is the viscous coefficient, while $\eta$ is the stress coefficient. In \cite{LM}, $\mu$ and $\alpha$ correspond respectively to $1/We$ and $2(1-\theta)/(WeRe),$
where $Re$ is the Reynolds number, $\theta$ is the ratio between the relaxation and retardation times and $We$ is
the Weissenberg number. $k$ is the coupling coefficient connecting the velocity $u$ (kinetic energy) and the stress tensor $\tau$ (elastic potential energy).

\par
The Oldroyd-B model describes the motion of some viscoelastic flows. Formulations about viscoelastic flows of Oldroyd-B type are first
established by Oldroyd in \cite{OJ}. For more detailed physical background and derivations about this model, we refer the readers to \cite{BCF,PM,139,OJ}.
\par

When $\nu>0$ and $\eta=0$, Chemin and Masmoudi \cite{CM}
first obtained the local solutions and global small solutions  {in the critical Besov spaces} when $\nu>0, \mu_1>0, \alpha>0,$ and $\eta=0.$ { They get the global small solutions when the  initial and coupling parametra is small, i.e.($\mu_1\alpha\leq c\mu_2\nu$).
The condition $\mu_1\alpha\leq c\mu_2\nu$ means that coupling effect between the two equation is less important than the viscosity.}
{Inspired} by the work \cite{CW,CMZ}, Zi, Fang and Zhang improved their results in the critical $L^p$ framework for the case of non-small coupling parameters in \cite{ZFZ}. Zhu \cite{145} got small global smooth solutions of the $\mathrm{3D}$ Oldroyd-B model with $\eta=0, \mu=0$ by observing the linearization of the system satisfies the damped wave equation. Inspired by the work of Zhu \cite{145} and Danchin in \cite{DR}, Chen and Hao \cite{CH2} extended this small data global solution in Sobolev spaces to the critical Besov spaces. Moreover, Zhai \cite{zhai} constructs global solutions for a class of highly oscillating initial velocities by observing the special structure of the system. In the corotational case, i.e. $b=0,$ Lions and Masmoudi established the existence of global weak solution in \cite{LM}.

When $\nu=0$ and $\eta>0$, Elgindi and Rousset \cite{ER} established a global large solution in a certain sense by building a new quantity {to avoid singular operators.}
Later, Liu and Elgindi \cite{135} extend these results in $3d$ for totally small initial data $\|u_0,\tau_0\|_{H^s(\mathbb{R}^3)},~s>\frac{5}{2}$. Recently, Constantin, Wu, Zhao and Zhu \cite{wjh8,WZ2} established these small data global solutions in the case of no damping mechanism and  general {tensor} dissipation.

In this paper, we consider the global well-posedness, stability and instability for the Oldroyd-B model \eqref{gs} with $\nu=0$ and $\eta>0$. Without lose of generality, we let $\nu=0, a=1, \mu=1$ and $\eta=1$. Since the coupling coefficient $k$ is finite, we set $0\leq k\leq 10$ in this paper, then \eqref{gs} becomes:
\begin{equation}\label{1}
\begin{cases}
\partial_tu+(u\cdot\nabla)u+\nabla p=k\mathrm{div}(\tau),\\
\partial_t\tau+(u\cdot\nabla)\tau-\Delta\tau+\tau+\mathrm{Q}(\nabla u,\tau)=\mathbb{D}u,\\
\mathrm{div} u=0,\\
u(x,0)=u_0(x),~~~\tau(0,x)=\tau_0(x),
\end{cases}
\end{equation}
When $k>0$, since $\tau$ and $\mathbb{D}u$ are the damping terma, some dissipations will appear on $\|\tau\|_{H^s}$ and $\|\mathbb{D}u\|_{H^{s-1}}$. However, when $k=0$, since the system \eqref{1} decouples, all the dissipations will vanish. This implies that the coupling coefficient $k$ plays a key role in energy dissipation, which is what we study on this paper.

Firstly, we introduce the global existence of \eqref{1}. Recall that, for $d=2$, by building a new quantity $\Gamma=w-\frac{curl div}{\Delta}\tau$ Elgindi and Rousset \cite{ER} established a class of global solutions for \eqref{1}, which need the following initial conditions:
\begin{equation*}
  \|u_0,\tau_0\|_{H^1(\mathbb{R}^2)}+\|curlu_0,\tau_0\|_{B^0_{\infty,1}(\mathbb{R}^2)}\leq\epsilon_0
  ,~~(u_0,\tau_0)\in H^s(\mathbb{R}^2)\times H^s(\mathbb{R}^2),~s>2.
\end{equation*}
Since $H^s\hookrightarrow B^{s+1}_{2,1}\hookrightarrow B^0_{\infty,1}$ with $s>1$, their result means some large initial data for the global existence. However, when $d=3$, it seems to be a challenge for the same conditions of initial data. Because a new term $w\nabla u$ appears in the equation of $w(t,x)$ in dimensional three, so as the equation of $\Gamma(t,x)$. This cause the main difficulty to obtain the global existence for \eqref{1}.

To overcome this difficulty, we observe that the damping term $\mathbb{D}u$ and $\tau$ can bring more damping effect for $w(t,x)$ (so as $\Gamma(t,x)$) when the coupling term $kdiv(\tau)$ is small enough. This will help us prove the global existence for a more general class of initial data such that:
\begin{equation}\label{small1}
  \|u_0\|_{B^1_{\infty,1}(\mathbb{R}^3)}+\|\tau_0\|_{B^0_{\infty,1}(\mathbb{R}^3)}\leq k^4\epsilon_0,~~\forall  k\in(0,\frac{1}{C^2+1}].
\end{equation}
By \eqref{small1} we obtain the global existence of \eqref{1} without $(u_0,\tau_0)\in H^s$. Indeed,  \cite{ER} used the following estimation ($\tilde{R}:=-(-\Delta)^{-1}curl(div(\cdot)),~~\forall \epsilon>0$):
\begin{align}\label{neednot}
\|[\tilde{R},u\cdot\nabla]\tau\|_{L^2(B^0_{\infty,1})}&\leq
C\|w\|_{L_t^{\infty}(L^{\infty}\cap L^2)}\|\tau\|_{L^2_t(B^{\epsilon}_{\infty,1}\cap L^2)}\notag\\
&\leq C\|w\|_{L_t^{\infty}(L^{\infty}\cap L^2)}\|\tau\|_{L^2_t(H^2)} ,~~
\end{align}
where $H^2(\mathbb{R}^2)\hookrightarrow B^{\epsilon}_{\infty,1}(\mathbb{R}^2)$. With the help the convective term $u\nabla \Gamma$, we find that the ${H^s}$ norms for $w,\tau$ are not required in \eqref{neednot}. So our condition
 \eqref{small1} implies a more general class of large initial data for global existence (see Remark \ref{small11}). Moreover,
 for sufficient small $k$, we obtain the exponential decay in the critical Besov spaces. Here are two results of global existence.
  \begin{theorem}\label{Th2}
 Let $(u_0,\tau_0)\in B^{1+\frac{3}{p}}_{p,1} (\mathbb{R}^3)\times B^{\frac{3}{p}}_{p,1}(\mathbb{R}^3)$ with $p\in [1,\infty]$ . If there exists a $\epsilon_0$ small enough such that
 $$\|u_0\|_{B^{1}_{\infty,1}}+\|\tau_0\|_{B^{0}_{\infty,1}}\leq k^4\epsilon_0:=\frac{k^4}{4(C^6+1)} ,~~~\forall k\in (0,\frac{1}{C^2+1}],$$
 then the solution $(u,\tau)$ of \eqref{1} exists globally in $C([0,\infty);B^{1+\frac{d}{p}}_{p,1}(\mathbb{R}))\times C([0,\infty);B^{\frac{d}{p}}_{p,1}(\mathbb{R}))\cap L^1([0,\infty);B^{2+\frac{d}{p}}(\mathbb{R})))$. Moreover, one have
\begin{align}\label{edecay}
 \|\nabla u(t)\|_{B^{\frac{3}{p}}_{p,1}}+k\|\tau(t)\|_{B^{\frac{3}{p}}_{p,1}}
 \leq  C\|\nabla u_0,~\tau_0\|_{B^{\frac{3}{p}}_{p,1}}e^{-\frac{k}{4}t}.
 \end{align}
\end{theorem}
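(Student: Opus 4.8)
The plan is to reduce the theorem to a single closed a priori estimate and then to globalize by a continuity argument. Since the only regularity loss sits in the velocity equation through the three--dimensional vortex--stretching term, I would not estimate $u$ directly but pass to the vorticity $w=\mathrm{curl}\,u$ and to the good unknown $\Gamma=w-\tilde{R}\tau$ of Elgindi--Rousset, where $\tilde{R}=-(-\Delta)^{-1}\mathrm{curl}\,\mathrm{div}$. The reason this is the right object is the algebraic identity $\tilde{R}\,\mathbb{D}u=\tfrac12 w$ (which uses $\mathrm{div}\,u=0$): applying $\tilde{R}$ to the $\tau$--equation, subtracting from the $w$--equation, and using $\mathrm{curl}\,\mathrm{div}=\Delta\tilde{R}$, one finds that $\Gamma$ solves a damped transport equation
\begin{equation*}
\partial_t\Gamma+(u\cdot\nabla)\Gamma+\tfrac12\Gamma
=(w\cdot\nabla)u+(k-1)\,\mathrm{curl}\,\mathrm{div}\,\tau
+\tfrac12\tilde{R}\tau+[\tilde{R},\,u\cdot\nabla]\tau+\tilde{R}\,\mathrm{Q}(\nabla u,\tau).
\end{equation*}
The genuine damping $+\tfrac12\Gamma$ on the left is exactly what replaces the missing viscosity in the $u$--equation, and the coupling coefficient $k$ survives only in the residual second--order term $(k-1)\,\mathrm{curl}\,\mathrm{div}\,\tau$, which vanishes when $k=1$.

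With $\Gamma$ in hand the estimates split into two linked pieces, carried out in Chemin--Lerner spaces. For $\tau$ I would use the maximal--regularity estimate for the damped heat equation $\partial_t\tau-\Delta\tau+\tau=\mathbb{D}u-(u\cdot\nabla)\tau-\mathrm{Q}$, obtaining control of $\tau$ in $\widetilde L^\infty_t B^{\frac3p}_{p,1}\cap L^1_t B^{2+\frac3p}_{p,1}$; the gain of two derivatives in $L^1_t$ is what absorbs the residual source $(k-1)\,\mathrm{curl}\,\mathrm{div}\,\tau$ of the $\Gamma$--equation. For $\Gamma$ (hence, via $w=\Gamma+\tilde{R}\tau$ and a Biot--Savart recovery, for $\nabla u$) I would use the $B^s_{p,1}$ transport estimate together with the exponential factor produced by the damping $+\tfrac12\Gamma$; this step needs only the $L^1_t$--Lipschitz control of $u$ furnished by $\|u\|_{B^1_{\infty,1}}$. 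The various source terms are then handled by the standard product, commutator and composition laws in Besov spaces; each of them is either genuinely quadratic --- $(w\cdot\nabla)u$, $\mathrm{Q}(\nabla u,\tau)$, the transport terms and the commutator $[\tilde{R},u\cdot\nabla]\tau$ --- or linear but carrying the $\tau$--dissipation, so that all can ultimately be absorbed.

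Closing this system is where the smallness threshold $k^4\epsilon_0$ is spent, and the vortex--stretching term $(w\cdot\nabla)u$ is the main obstacle. Because $w$ is recovered from $\Gamma$ and $\tau$, and because the coupling $k\,\mathrm{div}\,\tau$ together with the time--integration of the slow decay each contribute powers of $k$, the natural bootstrap quantity is $k$--weighted, in agreement with the combination $\|\nabla u\|+k\|\tau\|$ appearing in \eqref{edecay}. Concretely I would run a continuity argument on the maximal existence interval: assume a bound of the form $\|u\|_{\widetilde L^\infty_t B^1_{\infty,1}}+\|\tau\|_{\widetilde L^\infty_t B^0_{\infty,1}}\le 2Ck^4\epsilon_0$, feed it into the two estimates above, and verify that the quadratic contributions produce a right--hand side strictly smaller than the assumed bound once $\epsilon_0=\frac1{4(C^6+1)}$ and $k\le(C^2+1)^{-1}$. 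This improves the constant and, combined with a blow--up criterion of Beale--Kato--Majda type (continuation as long as $\int_0^T\|\nabla u\|_{L^\infty}\,dt<\infty$, which is controlled by the uniform bound on $\|u\|_{B^1_{\infty,1}}$), forbids finite--time blow--up and yields the global solution. The precise exponent $4$ is dictated by balancing these powers of $k$, and is the place where the argument is most delicate.

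For the exponential decay \eqref{edecay} I would superimpose a weighted (hypocoercive) energy estimate on the coupled linear part. Testing the $u$-- and $\tau$--equations against $u$ and $\tau$ only yields
\begin{equation*}
\tfrac{d}{dt}\big(\|u\|_{L^2}^2+k\|\tau\|_{L^2}^2\big)=-2k\|\tau\|_{L^2}^2-2k\|\nabla\tau\|_{L^2}^2,
\end{equation*}
which dissipates $\tau$ but leaves no damping on $u$, so the decay of $u$ is hidden. I would recover it by adding the cross term $\varepsilon k\langle\mathbb{D}u,\tau\rangle$, whose time derivative contains the favourable contribution $+\varepsilon k\|\mathbb{D}u\|_{L^2}^2\simeq\varepsilon k\|\nabla u\|_{L^2}^2$, the remaining cross terms being absorbed by the $\tau$--dissipation for $\varepsilon$ small; at the level of a general $B^{\frac3p}_{p,1}$ norm the same mechanism is read off from the damping of $\Gamma$ combined with the parabolic decay of $\tau$. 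Summing the localized estimates and using the smallness to discard the nonlinear terms, I expect a differential inequality $\frac{d}{dt}E+\frac k4 E\le 0$ for $E\simeq\|\nabla u\|_{B^{\frac3p}_{p,1}}+k\|\tau\|_{B^{\frac3p}_{p,1}}$; Gr\"onwall then gives \eqref{edecay}. The rate $k/4$ is structurally forced: the kinetic energy of $u$ can only be drained into the dissipative $\tau$--equation at a speed proportional to the coupling $k$, which is precisely the physical point emphasized in the introduction.
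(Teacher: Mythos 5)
Your overall architecture --- pass to the vorticity and an Elgindi--Rousset type good unknown, estimate $\tau$ by maximal regularity for the damped heat equation in Chemin--Lerner spaces, close a $k$-weighted bootstrap, and continue via a Beale--Kato--Majda criterion --- is exactly the paper's. But your choice of good unknown fails in precisely the regime the theorem addresses. You take $\Gamma=w-\tilde{R}\tau$, which produces the uniform damping $+\tfrac12\Gamma$ at the cost of the residual $(k-1)\,\mathrm{curl}\,\mathrm{div}\,\tau$. For $k\in(0,\tfrac{1}{C^2+1}]$ this residual has an $O(1)$ coefficient and is \emph{linear} in the solution, so it cannot be discarded by smallness of the data. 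The loop it creates is $\Gamma\to\nabla u\to\tau\to\mathrm{curl}\,\mathrm{div}\,\tau\to\Gamma$: the damping gives $\|\Gamma\|_{L^1_tB^0_{\infty,1}}\lesssim\|\Gamma_0\|+\,\|\tau\|_{L^1_tB^2_{\infty,1}}+\cdots$, the heat estimate gives $\|\tau\|_{L^1_tB^2_{\infty,1}}\lesssim\|\tau_0\|+\|\nabla u\|_{L^1_tB^0_{\infty,1}}+\cdots$, and Biot--Savart gives $\|\nabla u\|_{L^1_tB^0_{\infty,1}}\lesssim\|\Gamma\|_{L^1_tB^0_{\infty,1}}+\|\tau\|_{L^1_tB^2_{\infty,1}}$; the product of the constants in this chain is a generic $O(1)$ quantity with no reason to be below $1$, so the self-term cannot be absorbed. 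That this is not merely a technical obstruction can be seen on the linearization: for the system $\partial_tw=k\,\mathrm{curl}\,\mathrm{div}\,\tau$, $\partial_t\tau-\Delta\tau+\tau=\mathbb{D}u$, the slow eigenvalue at frequency $\xi$ is $\sim -\tfrac{k|\xi|^2}{2(1+|\xi|^2)}$, i.e. $w$ decays only at rate $O(k)$. Hence no good unknown can satisfy a $\tfrac12$-damped transport equation modulo absorbable terms; your own closing observation that the rate $k/4$ is ``structurally forced'' is correct and is in direct contradiction with the $+\tfrac12\Gamma$ damping your scheme relies on.

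The paper's fix is to weight the corrector by $k$: with $\Gamma=w-k\tilde{R}\tau$ the second-order coupling $k\,\mathrm{curl}\,\mathrm{div}\,\tau=k\Delta\tilde{R}\tau$ cancels \emph{identically} against $-k\tilde{R}\Delta\tau$, so no linear second-order feedback survives; the damping then comes solely from $k\tilde{R}\mathbb{D}u=\tfrac{k}{2}w$, i.e. it is only $O(k)$, which is simultaneously the origin of the $e^{-kt/4}$ rate in \eqref{edecay} and of the powers of $k$ in the smallness threshold $k^4\epsilon_0$ (the $\tfrac1k$ loss from integrating the slow damping must be beaten by extra factors of $k$ in the data and in the coupling term). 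With this replacement the remaining sources in the $\Gamma$-equation are either quadratic ($w\cdot\nabla u$, $k\tilde{R}Q$, the commutator) or linear with an explicit factor $k$, and your bootstrap closes along the lines you describe; the exponential decay is then read off from $e^{kt}$-weighted versions of the $\Gamma$- and $\tau$-estimates rather than from a hypocoercive cross term (where, incidentally, your sign is off: one must subtract a multiple of $\langle u,\mathrm{div}\,\tau\rangle$ from the energy to gain $-\varepsilon\|\mathbb{D}u\|_{L^2}^2$).
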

\begin{remark}\label{small11}
Since $B^{\frac{3}{p}}_{p,1}\hookrightarrow B^{0}_{\infty,1},~p<\infty$. By Theorem \ref{Th2} we claim that our result includes some large initial data. For example,
choose $\varphi$ be a smooth, radial and non-negative function in $\mathbb{R}^2$ such that
\begin{equation}
\phi=\begin{cases}
1,~~for~|\xi|\leq 1,\\
0,~~for~|\xi|\geq 2.
\end{cases}
\end{equation}
Let $(u_0,\tau_0):=\frac{1}{N}(\psi,\varphi)$, where $\psi,\varphi\in\mathbb{S}^3$, $div\psi=0$ and $F(\varphi)=(\phi(\xi-2^Ne),\phi(\xi-2^Ne),\phi(\xi-2^Ne))$ with $e=(1,1),N\in\mathbb{N}^+$. Then, one can easily deduce that
 $$\Delta_j\varphi=\varphi~~when~j=N;
 ~~~\Delta_j\varphi=0~~when~j\neq N.$$
So for sufficient large $N$ and $p<\infty$, we have
$$\|\tau_0\|_{B^{\frac{3}{p}}_{p,1}}\approx\frac{2^{\frac{3}{p}N}}{N},
~~but~~\|\tau_0\|_{B^{0}_{\infty,1}}\leq \frac{C}{N}.$$
This implies the global existence for some large initial data, which is different from the result in \cite{ER,wjh8}.
\end{remark}
 \begin{theorem}\label{th1}
 Let $(u_0,\tau_0)\in (H^{s} (\mathbb{R}^3),H^{s} (\mathbb{R}^3))$ with $s>\frac{5}{2}$. If there exists a $\epsilon_0$ small enough such that
 \begin{align}\label{smallness}
 \|\nabla u_0\|_{H^{s-1}}+\|\tau_0\|_{H^{s}}\leq k^6\epsilon_0= \frac{k^6}{4(C^6+1)} ,~~~\forall k\in (0,10]
 \end{align}
 then the solution $(u,\tau)$ of \eqref{1} exists globally in $C([0,\infty);H^{s}(\mathbb{R}))\times C([0,\infty);H^{s}(\mathbb{R}^3))\cap L^2([0,\infty);H^{s+1}(\mathbb{R}^3)))$. Moreover, one have
 \begin{align}\label{pdecay}
 \|\tau(t)\|_{H^s}+\|\nabla u(t)\|_{H^{s-1}}\leq C\epsilon_0(1+t)^{-\frac{1}{2}}.
 \end{align}
 \end{theorem}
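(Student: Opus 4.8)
The plan is to prove Theorem \ref{th1} by establishing a global-in-time a priori energy estimate in the Sobolev framework $H^s$ with $s>\frac{5}{2}$, and then closing a continuation argument. The key structural feature I would exploit is exactly the one advertised in the introduction: $\tau$ contributes a damping term $+\tau$ in its own equation, and the coupling $\mathbb{D}u\mapsto \tau\mapsto k\,\mathrm{div}(\tau)$ feeds a damping effect back into the velocity. So my target is a differential inequality of the form
\begin{equation*}
\frac{d}{dt}E(t)+cD(t)\leq C\,E(t)\,D(t)^{1/2}\quad\text{or}\quad \frac{d}{dt}E(t)+cD(t)\le \frac{C}{k^{m}}E(t)^{3/2},
\end{equation*}
where $E(t)\sim \|\nabla u\|_{H^{s-1}}^2+\|\tau\|_{H^s}^2$ is the energy and $D(t)$ the dissipation (including $\|\tau\|_{H^s}^2$ from the damping and $\|\nabla\tau\|_{H^s}^2$ from the Laplacian). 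Because $s-1>\frac32$, Sobolev embedding $H^{s-1}\hookrightarrow L^\infty$ lets me handle the transport terms $(u\cdot\nabla)u$ and $(u\cdot\nabla)\tau$ and the bilinear $\mathrm{Q}(\nabla u,\tau)$ by standard commutator/product estimates in $H^s$.

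First I would apply $\Lambda^{s}$ (or differentiate up to order $s$ when $s$ is an integer, otherwise use a Kato--Ponce type fractional commutator) to both equations, pair with $\Lambda^s u$ and $\Lambda^s\tau$ respectively, and integrate by parts. The pressure and the divergence-free condition kill $\nabla p$ against $u$; the transport terms produce commutators $[\Lambda^s,u\cdot\nabla]$ that are controlled by $\|\nabla u\|_{L^\infty}\|\cdot\|_{H^s}$. The heart of the computation is the coupling: the term $k\,\mathrm{div}(\tau)$ tested against $\Lambda^s u$ and the term $\mathbb{D}u$ tested against $\Lambda^s\tau$ are, up to the factor $k$, adjoint to each other. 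To see the dissipation on $u$ I would NOT balance these off but rather keep the $\|\tau\|_{H^s}^2$ damping and the $\|\nabla\tau\|_{H^s}^2$ from the Laplacian, then use the $u$-equation to trade a fraction of $\|\nabla u\|_{H^{s-1}}$ for $\|\tau\|_{H^s}$; this is where the powers of $k$ (hence the factor $k^6$ in \eqref{smallness}) enter, since the coupling is weighted by $k$ and one pays $k^{-1}$ when extracting velocity information from the stress. This yields smallness propagation: under the hypothesis $E(0)\lesssim k^{12}\epsilon_0^2$ the nonlinear term stays subordinate to the linear damping, so $E(t)$ stays small for all time and the $L^2_tH^{s+1}$ bound follows by integrating the dissipation.

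For the algebraic decay \eqref{pdecay} I would upgrade the energy estimate to a Fourier-splitting / time-weighted argument. The damping gives $\|\tau\|_{H^s}$ exponential-type control in high frequencies, but the velocity has no direct dissipation at low frequencies, so only a polynomial rate $(1+t)^{-1/2}$ is expected — consistent with the heat-like decay of $\tau$ being transmitted to $u$ through the coupling. Concretely I would prove a bound $\frac{d}{dt}E+c(1+t)^{-1}E\le (\text{small})$ by splitting frequencies at $|\xi|^2\sim (1+t)^{-1}$, using the $\|\nabla\tau\|_{H^s}$ dissipation on high frequencies and the conservation-type control on low frequencies, then invoke Gronwall to extract the $(1+t)^{-1/2}$ rate; the precise low-frequency estimate, possibly via the linearized damped-wave structure of the $(u,\tau)$ system (as in the works of Zhu and Chen--Hao cited), is what pins down the exponent $\tfrac12$.

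The main obstacle I anticipate is the velocity dissipation: the $u$-equation has no diffusion (recall $\nu=0$), so all of the decay and smallness control of $\nabla u$ must be borrowed from $\tau$ through the $k$-weighted coupling, and the commutator $[\Lambda^s,u\cdot\nabla]\tau$ together with the bilinear $\mathrm{Q}(\nabla u,\tau)$ must be absorbed by the $\|\nabla\tau\|_{H^s}^2$ and $\|\tau\|_{H^s}^2$ dissipation after paying the correct negative powers of $k$. Getting the bookkeeping of these $k$-powers tight enough to match the stated threshold $k^6\epsilon_0$ — rather than a worse power — is the delicate quantitative step, and I expect it to drive the choice of which auxiliary quantity (e.g.\ $\nabla u$ versus $\mathbb{D}u$, or a $\Gamma$-type combination) to estimate.
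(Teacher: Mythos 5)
Your plan coincides with the paper's own proof: the paper runs exactly this $H^s$ energy/bootstrap scheme (its estimates \eqref{1.1}--\eqref{1.6}), with the velocity dissipation recovered in time-integrated form by pairing the $\tau$-equation's source $\mathbb{D}u$ against the stress (which produces the non-coercive boundary term $\|\tau\|_{L^2}\|\nabla u\|_{L^2}$ rather than a pointwise $D(t)$), and the $k$-powers entering through the $k$-weighted energy $\|u\|^2+k\|\tau\|^2$, so that recovering $\|\tau\|$ costs the anticipated negative powers of $k$. For the decay \eqref{pdecay} the paper simply defers to \cite{135}, whose time-weighted argument based on the integrability of $\|\nabla u\|^2_{H^{s-1}}+\|\tau\|^2_{H^s}$ is the ``time-weighted'' half of the method you propose.
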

 \begin{remark}\label{interesting}
\cite{135} proved the global existence when $\|u_0,\tau_0\|_{H^s}\leq \epsilon_0~(s>\frac{5}{2})$ , and the polynomial decay of $\|\tau(t)\|_{H^s}+\|\nabla u(t)\|_{H^{s-1}}.$ Theorem \ref{th1} just attenuates the condition such that $\|u_0\|_{L^2}$ could be large, a small improvement.

Combining Theorem \ref{th1} and Theorem \ref{Th2}, we find an interesting phenomenon. When the coupling coefficient $k$ is large ($k\in (0,10]$), by Theorem \ref{th1}, we obtain the polynomial decay of $\|\nabla u,\tau\|_{L^2}$. However, when $k$ is small ($k\in (0,\frac{1}{C^2+1}]$), by Theorem \ref{Th2}, we obtain the exponential decay of $\|\nabla u,\tau\|_{L^2}$. This implies that the size of the coupling coefficient $k$ determines the extent of the decay of the velocity field $u$ and the stress tensor $\tau$. There will be a better decay for sufficient small $k$, since small $k$ means small distraction for the equation of $u(t,x)$, while the damping term $\mathbb{D}u$ can develop a larger impact.
\end{remark}

Next, by Remark \ref{interesting},
$\bar{k}= \frac{1}{C^2+1}$ seems to be a boundary between these two kinds of attenuation. Furthermore, one will ask whether the solutions are close to each other when $k\rightarrow \bar{k}$? The answer is true. Now we give a more general theorem to verify that all the solutions in above theorem will be close to each other when $k^m\rightarrow k$ for any fixed $k>0$ and $t>0$. 
\begin{theorem}\label{th3}
 Let $(u_0,\tau_0)\in H^{s} (\mathbb{R}^3)\times H^{s} (\mathbb{R}^3)$ with $s>\frac{5}{2}$. Assume
  \begin{align}\label{th3.1}
\lim_{m\rightarrow \infty}|k^m-k|=0~~for~any~fixed~k,k^m\in(0,10].
 \end{align}
 If the initial data satisfies
 $$\|\nabla u_0\|_{H^{s-1}}+\|\tau_0\|_{H^{s}}\leq k^6\epsilon_0 ,$$
 then we have
 \begin{align}\label{th3.2}
 \lim_{m\rightarrow \infty}\|u^m-u\|_{L^{\infty}([0,\infty);H^{s})}+ \|\tau^m-\tau\|_{L^{\infty}([0,\infty);H^{s})\cap L^2([0,\infty);H^{s+1})}=0,
 \end{align}
 where $(u^m,\tau^m)$ are the global solutions of \eqref{1} with the coefficient $k^m$~($m\in\mathbb{N}\cap\infty$) and $(u^{\infty},\tau^{\infty}):=(u,\tau)$.
 \end{theorem}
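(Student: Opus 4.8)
The plan is to run a uniform-in-time energy estimate on the difference system, exploiting two facts: all the solutions share the \emph{same} initial datum $(u_0,\tau_0)$, and the coupling between $u$ and $\tau$ carries exactly the dissipative/hypocoercive structure already used to prove Theorem \ref{th1}. First I would fix the family of solutions. Since $k^m\to k>0$, eventually $k^m\geq\underline{k}>0$, and by continuity of $t\mapsto t^6$ together with $\|\nabla u_0\|_{H^{s-1}}+\|\tau_0\|_{H^s}\leq k^6\epsilon_0$ the smallness hypothesis \eqref{smallness} holds with $k^m$ in place of $k$ for all large $m$. Theorem \ref{th1} then yields global solutions $(u^m,\tau^m)$ and $(u,\tau)$ with a bound uniform in $m$, namely $\sup_m\big(\|u^m\|_{L^\infty_tH^s}+\|\tau^m\|_{L^\infty_tH^s}+\|\tau^m\|_{L^2_tH^{s+1}}\big)\leq C\epsilon_0$, together with the decay \eqref{pdecay}; in particular $\int_0^\infty\|\tau^m\|_{H^{s+1}}^2\,dt\leq C\epsilon_0^2$ uniformly in $m$.

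Writing $\delta u=u^m-u$, $\delta\tau=\tau^m-\tau$ and $\delta k=k^m-k$, subtracting the two copies of \eqref{1} gives
\begin{align*}
\partial_t\delta u+(u^m\cdot\nabla)\delta u+(\delta u\cdot\nabla)u+\nabla\delta p&=k^m\,\mathrm{div}(\delta\tau)+\delta k\,\mathrm{div}(\tau),\\
\partial_t\delta\tau+(u^m\cdot\nabla)\delta\tau+(\delta u\cdot\nabla)\tau-\Delta\delta\tau+\delta\tau&=\mathbb{D}\delta u-\mathrm{Q}(\nabla u^m,\delta\tau)-\mathrm{Q}(\nabla\delta u,\tau),
\end{align*}
with $\mathrm{div}\,\delta u=0$ and, crucially, $\delta u(0)=\delta\tau(0)=0$. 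The coupling parameter in front of $\mathrm{div}(\delta\tau)$ is $k^m$, and the only genuine forcing is the term $\delta k\,\mathrm{div}(\tau)$, whose size is controlled by the background $\tau$.

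The heart of the argument is the weighted energy $\mathcal{E}_\sigma(t)=\tfrac12\|\delta u\|_{H^\sigma}^2+\tfrac{k^m}{2}\|\delta\tau\|_{H^\sigma}^2$. Differentiating and using that $\delta\tau$ is symmetric, the two coupling contributions cancel exactly, $k^m\langle\mathrm{div}\,\delta\tau,\delta u\rangle_{H^\sigma}+k^m\langle\mathbb{D}\delta u,\delta\tau\rangle_{H^\sigma}=-k^m\langle\delta\tau,\mathbb{D}\delta u\rangle_{H^\sigma}+k^m\langle\mathbb{D}\delta u,\delta\tau\rangle_{H^\sigma}=0$, so no derivative is lost through the coupling. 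The Laplacian and the damping produce $k^m(\|\delta\tau\|_{H^{\sigma+1}}^2+\|\delta\tau\|_{H^\sigma}^2)$, while the transport commutators and the terms $(\delta u\cdot\nabla)\tau$, $\mathrm{Q}(\nabla u^m,\delta\tau)$, $\mathrm{Q}(\nabla\delta u,\tau)$ are bounded by Sobolev product/commutator laws and absorbed thanks to the smallness of the backgrounds. As in Theorem \ref{th1} I would add the standard hypocoercive cross term to manufacture an effective damping $\gamma\|\delta u\|_{H^\sigma}^2$ with $\gamma\sim k$; the forcing is then handled by Young's inequality, $|\delta k|\,\|\tau\|_{H^{\sigma+1}}\|\delta u\|_{H^\sigma}\leq\tfrac{\gamma}{2}\|\delta u\|_{H^\sigma}^2+\tfrac{|\delta k|^2}{2\gamma}\|\tau\|_{H^{\sigma+1}}^2$. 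Integrating in time and using $\mathcal{E}_\sigma(0)=0$ gives
\[
\sup_{t\geq0}\mathcal{E}_\sigma(t)+\int_0^\infty\|\delta\tau\|_{H^{\sigma+1}}^2\,dt\ \lesssim\ \frac{|\delta k|^2}{\gamma}\int_0^\infty\|\tau\|_{H^{\sigma+1}}^2\,dt\ \lesssim\ |\delta k|^2\ \longrightarrow\ 0,
\]
the finiteness of the time integral being precisely the uniform $L^2_tH^{\sigma+1}$ bound furnished by Theorem \ref{th1}.

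The main obstacle is that this estimate closes only for $\sigma=s-1$, not for the top index $\sigma=s$. The culprit is $(\delta u\cdot\nabla)u$: for a single solution the analogue cancels via $\mathrm{div}\,u=0$, but here $\delta u\neq u$, and $\|(\delta u\cdot\nabla)u\|_{H^s}$ requires $u\in H^{s+1}$, which is unavailable because the momentum equation carries no velocity dissipation. At level $s-1$ the algebra property of $H^{s-1}$ (as $s-1>\tfrac32$) renders this term harmless, so I first obtain $\|\delta u\|_{L^\infty_tH^{s-1}}+\|\delta\tau\|_{L^\infty_tH^{s-1}}+\|\delta\tau\|_{L^2_tH^s}\to0$ and, via the induced damping, $\delta u\in L^2_tH^{s-1}$ with norm $O(|\delta k|)$. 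To upgrade to the claimed convergence in $H^s$ (and $L^2_tH^{s+1}$ for $\delta\tau$) using only uniform $H^s$ bounds on the backgrounds, I would use a Bona--Smith-type frequency splitting: the low-frequency part converges in $H^s$ by virtue of the $H^{s-1}$ convergence just obtained, while the high-frequency tail $\|(\mathrm{Id}-S_N)(u^m,u)\|_{H^s}$ is made uniformly small in $m$ and $t$ by choosing $N$ large, using that all solutions emanate from the common datum $(u_0,\tau_0)\in H^s$. Propagating this top-order tail smallness uniformly in time is the delicate point of the whole proof, and it is here that the dissipation of $\delta\tau$ and the decay \eqref{pdecay} must be invoked; once $\delta u\to0$ in $L^\infty_tH^s\cap L^2_tH^s$ is secured, feeding it into the parabolic $\delta\tau$-equation and using maximal regularity returns the convergence of $\delta\tau$ in $L^\infty_tH^s\cap L^2_tH^{s+1}$, completing \eqref{th3.2}.
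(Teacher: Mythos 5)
Your overall strategy coincides with the paper's: a difference-system energy estimate that closes at the level $H^{s-1}$ (this is essentially Lemma \ref{low}, adapted to two different coupling constants, with the extra forcing $(k^m-k)\mathrm{div}\,\tau$ absorbed much as you do, using the uniform bounds on the backgrounds from Theorem \ref{th1}), followed by a Bona--Smith-type argument to recover the top index $s$; you also correctly identify $(\delta u\cdot\nabla)u$ as the term that blocks a direct $H^s$ estimate because the velocity equation carries no smoothing.

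The gap is in how you propose to carry out the top-order upgrade. As written, you split the \emph{solutions} in frequency and claim that $\|(\mathrm{Id}-S_N)(u^m,u)\|_{H^s}$ can be made small uniformly in $m$ and $t$ by taking $N$ large. That step would fail as stated: the nonlinearity transfers energy to high frequencies, and a uniform $H^s$ bound on a family of solutions does not yield uniform-in-time, uniform-in-$m$ smallness of the $H^s$ frequency tail; you yourself flag this as ``the delicate point'' without supplying the mechanism. The paper's implementation mollifies the \emph{data} instead: it introduces auxiliary solutions $(u^m_j,\tau^m_j)$ emanating from $(S_ju_0,S_j\tau_0)$ and writes the three-term triangle inequality \eqref{3.18}. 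The middle term $\| u^m_j-u_j,\tau^m_j-\tau_j\|_{L^{\infty}_t({H}^{s})}$ is estimated for \emph{fixed} $j$, where the mollified backgrounds lie in $H^{s+1}$ with norm $O(2^j)$, so the troublesome top-order terms close at the price of a factor $2^j$ that is harmless because $m\to\infty$ is taken first (estimate \eqref{3.20}, which also carries the $(k^m-k)\,\mathrm{div}\,\tau^m_j$ forcing); the two outer terms are controlled uniformly in $m$ by $\|(\mathrm{Id}-S_j)(u_0,\tau_0)\|_{H^s}$ via the initial-data stability result, Theorem \ref{th5} and its proof. If you replace your solution-truncation step by this data-mollification comparison, your argument becomes the paper's proof.
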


However, the damping effect can not be better when the coupling coefficient $k$ is too small that $k\rightarrow 0$. Because \eqref{1}  will decouple as $k=0$, which means the damping effect will vanish! As a result, \eqref{th3.2} is no longer valid. Indeed, we will prove that the energy of $u^k$ will have a jump when $k\rightarrow 0$ for large time. This implies the system \eqref{1} is not globally steady for $k\rightarrow 0$, while for local time \eqref{1} is steady in \cite{luoshidi}. 

Set $\mathbb{A}:=\{(u_0,\tau_0)\in H^{s}(\mathbb{R}^3)\times H^{s} (\mathbb{R}^3),~s>\frac{5}{2}|~\eqref{1} has~a~unique~solution~for~any~fixed~k\}.$ Here is the unsteady result.

\begin{theorem}\label{th4}
 Let $(u^k,\tau^k)$ be the corresponding solutions for \eqref{1} with every $k\in [0,\epsilon_0]$.
 Then there exists a large time $T(k)$ and a sequence  $(u_0,\tau_0)(k)\in\mathbb{A}$ as initial data such that
when $t\geq T(k)$, we have
 $$ \|u-u^k\|_{L^2}\geq \frac{\epsilon_0}{2},$$
 where $\epsilon_0=\frac{1}{4(C^6+1)}$, a fixed constant.
 \end{theorem}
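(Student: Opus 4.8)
The plan is to contrast the exact conservation of kinetic energy at $k=0$ with a \emph{definite} energy loss for every $k>0$, and to read the jump off the triangle inequality. At $k=0$ the velocity equation in \eqref{1} decouples into the incompressible Euler system, so along the $k=0$ flow the energy is conserved, $\|u^{0}(t)\|_{L^2}=\|u_0\|_{L^2}$, for as long as the solution exists. I would therefore choose the initial data $(u_0,\tau_0)(k)$ with $\tau_0=0$ and with $u_0$ a smooth, divergence-free, large-scale field normalised so that $\|u_0\|_{L^2}\ge \epsilon_0$ while $\|\nabla u_0\|_{H^{s-1}}\le k^6\epsilon_0$; this is possible precisely because the smallness hypothesis of Theorems \ref{th1}--\ref{th3} constrains only $\|\nabla u_0\|_{H^{s-1}}$ and leaves $\|u_0\|_{L^2}$ free (cf. Remark \ref{interesting}). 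For such data Theorem \ref{th1} makes $u^{k}$ global and places $(u_0,\tau_0)(k)\in\mathbb{A}$; I would take $u_0$ inside a class for which the $k=0$ (Euler) solution is also globally defined (e.g. an explicitly steady, or large-scale small-gradient, profile), so the comparison is legitimate on $[0,\infty)$.

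Next I would record the basic energy law for $k>0$. Testing the $u$-equation with $u^{k}$ and the $\tau$-equation with $\tau^{k}$, using $\operatorname{div}u^{k}=0$ and the symmetry of $\tau^{k}$ (which kills the pressure term and the skew part of $Q$), one obtains
\begin{equation*}
\frac{d}{dt}\Big(\tfrac{1}{2k}\|u^{k}\|_{L^2}^2+\tfrac12\|\tau^{k}\|_{L^2}^2\Big)+\|\nabla\tau^{k}\|_{L^2}^2+\|\tau^{k}\|_{L^2}^2+b\!\int(\mathbb{D}u^{k}\,\tau^{k}+\tau^{k}\,\mathbb{D}u^{k}):\tau^{k}\,dx=0 .
\end{equation*}
Since $\|\nabla u^{k}\|_{L^\infty}$ is small and decaying by \eqref{pdecay}, the $b$-term is absorbed into $\|\tau^{k}\|_{L^2}^2$, so $E(t):=\tfrac{1}{2k}\|u^{k}\|_{L^2}^2+\tfrac12\|\tau^{k}\|_{L^2}^2$ is non-increasing; in particular $\|u^{k}(t)\|_{L^2}\le\|u_0\|_{L^2}$ and, with $\tau_0=0$,
\begin{equation*}
\|u_0\|_{L^2}^2-\|u^{k}(t)\|_{L^2}^2\ \ge\ k\!\int_0^t\|\tau^{k}(s)\|_{L^2}^2\,ds .
\end{equation*}

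The heart of the argument is to turn this into a definite loss of velocity energy, i.e. to produce a time $T(k)$ with $\|u^{k}(T(k))\|_{L^2}\le \|u_0\|_{L^2}-\epsilon_0/2$. Because $E$ decreases to a limit $E_\infty\ge0$ and $\int_0^\infty\|\tau^{k}\|_{L^2}^2\,ds<\infty$, one has $\tau^{k}(t)\to0$ and $\|u^{k}(t)\|_{L^2}^2\to 2kE_\infty$. I would show $E_\infty=0$ by a LaSalle-type argument: to leading order $\tau^{k}$ is slaved to the forcing, $\tau^{k}\approx(1-\Delta)^{-1}\mathbb{D}u^{k}$, the transient and the quadratic corrections $(u^{k}\!\cdot\!\nabla)\tau^{k}$ and $Q$ being higher order in the small quantity $\|\nabla u^{k}\|$; hence $\tau^{k}\to0$ forces $\mathbb{D}u^{k}\to0$, and since the only divergence-free $L^2(\mathbb{R}^3)$ field with $\mathbb{D}u=0$ is $u=0$, this yields $\|u^{k}(t)\|_{L^2}\to0$. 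Feeding a quantitative version of the slaving into the displayed balance produces an explicit dissipation rate and the threshold $T(k)$. With this in hand the conclusion is immediate: for $t\ge T(k)$,
\begin{equation*}
\|u-u^{k}\|_{L^2}\ \ge\ \|u^{0}\|_{L^2}-\|u^{k}\|_{L^2}\ =\ \|u_0\|_{L^2}-\|u^{k}\|_{L^2}\ \ge\ \frac{\epsilon_0}{2}.
\end{equation*}

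The main obstacle is exactly this dissipation lower bound. Theorem \ref{th1} supplies only \emph{upper} decay bounds on $\tau^{k}$ and $\nabla u^{k}$, and on the whole space $\mathbb{R}^3$ there is no compact Sobolev embedding, so $\|\nabla u^{k}\|_{H^{s-1}}\to0$ does not by itself force $\|u^{k}\|_{L^2}\to0$ (a slowly spreading large-scale profile could retain its $L^2$ mass). Making the slaving $\tau^{k}\approx(1-\Delta)^{-1}\mathbb{D}u^{k}$ quantitative — controlling the transient and the nonlinear corrections uniformly in time, and verifying that the low-frequency concentration of $u_0$ persists long enough to keep the dissipation rate bounded below — is where the real work lies, and it is what fixes the (necessarily $k$-dependent, and large as $k\to0$) time $T(k)$. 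A secondary point is to guarantee that the $k=0$ Euler reference solution does not break down before $T(k)$, which I would arrange through the large-scale, small-gradient (or explicitly steady) choice of $u_0$.
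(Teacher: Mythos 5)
Your overall strategy coincides with the paper's: take $\tau_0=0$ and a divergence-free $u_0$ with $\|u_0\|_{L^2}=\epsilon_0$ but gradient norms of size $k^6\epsilon_0$, use conservation of energy for the decoupled Euler flow at $k=0$, show that $\|u^k(t)\|_{L^2}$ drops below $\epsilon_0/2$ for $t\geq T(k)$, and conclude by the triangle inequality. However, the step you yourself flag as ``where the real work lies'' --- proving that $\|u^k(t)\|_{L^2}$ actually tends to $0$ rather than merely being non-increasing --- is a genuine gap, and the LaSalle/slaving argument you sketch would not close it for exactly the reason you state: on $\mathbb{R}^3$ the upper bounds of Theorem \ref{th1} on $\|\nabla u^k\|_{H^{s-1}}$ and $\|\tau^k\|_{H^s}$ do not force the $L^2$ mass of $u^k$ to vanish, and $\tau^k\to 0$ together with $\mathbb{D}u^k\to 0$ in strong norms does not rule out a spreading low-frequency profile retaining its energy. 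A proof that leaves this step as a heuristic is not a proof of the theorem.

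The paper closes this gap by a different, and quite concrete, mechanism that your proposal does not find. First, because $k\leq\epsilon_0$ lies in the small-$k$ regime, Theorem \ref{Th2} (not just Theorem \ref{th1}) applies and gives \emph{exponential} decay $\|\nabla u^k(t)\|_{L^2}\leq C\|w^k(t)\|_{L^2}\leq Ce^{-\frac{k}{4}t}$. Second, the explicit choice $u_0=k^6\epsilon_0\,\phi(k^4x)/\|\phi\|_{L^2}$ with $\phi$ Schwartz puts $u_0\in\dot H^{-1}(\mathbb{R}^3)$, and the transport estimate yields a uniform-in-time bound $\|u^k\|_{L^\infty_t(\dot H^{-1})}\leq C$. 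The interpolation $\|u^k(t)\|_{L^2}^2\leq \|u^k(t)\|_{\dot H^{-1}}\|\nabla u^k(t)\|_{L^2}$ then converts the gradient decay into $\|u^k(t)\|_{L^2}^2\leq Ce^{-\frac{k}{4}t}$, which gives the explicit threshold $T(k)=k^{-3}$. It is precisely the negative-regularity control that substitutes for the missing compactness in your argument. A secondary point: the paper takes $\phi$ axisymmetric so that the $k=0$ Euler solution is globally well-posed by the cited axisymmetric theory; your suggestion of a ``large-scale small-gradient'' profile does not by itself guarantee global existence for 3D Euler, so this choice also needs to be made as carefully as the paper does.
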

  \begin{remark}
When $k\in (0,\frac{1}{C^2+1}]$, by Theorem \ref{Th2}, we obtain the exponential decay of $\|u\|_{L^2}$. However, when $k=0$, by the classical Euler equation we deduce that $\|u(t)\|_{L^2}$ is conservative, while $\|\tau\|_{H^s}$ doesn't decay anymore. This implies that the sign of the coupling coefficient $k$ determines whether the norm of the velocity field $u(t,x)$ has decay. In fact, when $k>0$, since $\tau$ is a heat type equation with damping mechanisms $\tau$ and $\mathbb{D}u$, the coupling term $kdiv\tau$ passes the decay of $\tau$ to $u$, but this process of transformation is transient for $k\rightarrow 0$ in large time (see Theorem \ref{th4}).

All in all, combining Theorem 1.1-Theorem 1.4 we conclude that larger coupling coefficient $k$ will have a worse impact to the extent
of the decay of $(u,\tau)$, but it is necessary for the appearance of decay ($k$ must be positive, or the decay will vanish instantly), while the damping term $\tau$ and $\mathbb{D}u$ always bring the well impact for the decay.
\end{remark}

The paper is organized as follows. In section 2, we will give the tools(Littlewood-Paley decomposition and
paradifferential calculus) and Besov spaces. In section 3,  we prove the global existence of \eqref{1} for different kinds of $k$. In section 4, we prove the stability of \eqref{1} when $k^m\rightarrow k>0$. In section 5, we show that the energy of $u^k(t,x)$ will have a jump when $k\rightarrow 0$ for large time, which implies the \eqref{1} is not globally steady for $k\rightarrow 0$.

\par\noindent
{\bf Notation}
Throughout the paper, we denote the norms of usual Lebesgue space $L^p(\R^3)$ by $
\|u\|^p_{L^p}=\int_{\Omega}|u|^p dx,~\hbox{for}~1\leq p<\infty$.
$C_i$ and $C$ denote different
positive constants in different places.

\section{Preliminaries}
\par
In this section, we will recall some properties about the Littlewood-Paley decomposition and Besov spaces.
\begin{proposition}\label{p2}
Let $\mathcal{C}$ be the annulus $\{\xi\in\mathbb{R}^d:\frac 3 4\leq|\xi|\leq\frac 8 3\}$. There exist radial functions $\chi$ and $\varphi$, valued in the interval $[0,1]$, belonging respectively to $\mathcal{D}(B(0,\frac 4 3))$ and $\mathcal{D}(\mathcal{C})$, and such that
$$ \forall\xi\in\mathbb{R}^d,\ \chi(\xi)+\sum_{j\geq 0}\varphi(2^{-j}\xi)=1, $$
$$ \forall\xi\in\mathbb{R}^d\backslash\{0\},\ \sum_{j\in\mathbb{Z}}\varphi(2^{-j}\xi)=1, $$
$$ |j-j'|\geq 2\Rightarrow\mathrm{Supp}\ \varphi(2^{-j}\cdot)\cap \mathrm{Supp}\ \varphi(2^{-j'}\cdot)=\emptyset, $$
$$ j\geq 1\Rightarrow\mathrm{Supp}\ \chi(\cdot)\cap \mathrm{Supp}\ \varphi(2^{-j}\cdot)=\emptyset. $$
The set $\widetilde{\mathcal{C}}=B(0,\frac 2 3)+\mathcal{C}$ is an annulus, and we have
$$ |j-j'|\geq 5\Rightarrow 2^{j}\mathcal{C}\cap 2^{j'}\widetilde{\mathcal{C}}=\emptyset. $$
Further, we have
$$ \forall\xi\in\mathbb{R}^d,\ \frac 1 2\leq\chi^2(\xi)+\sum_{j\geq 0}\varphi^2(2^{-j}\xi)\leq 1, $$
$$ \forall\xi\in\mathbb{R}^d\backslash\{0\},\ \frac 1 2\leq\sum_{j\in\mathbb{Z}}\varphi^2(2^{-j}\xi)\leq 1. $$
\end{proposition}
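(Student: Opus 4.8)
The plan is to build everything from a single radial bump function and to get $\varphi$ as a dyadic difference, after which every assertion reduces to a telescoping sum or an elementary comparison of radii. First I would fix a smooth, radial function $\chi$ with values in $[0,1]$ that is \emph{radially non-increasing}, equals $1$ on the ball $B(0,\frac{3}{4})$, and is supported in $B(0,\frac{4}{3})$; such a $\chi$ is produced by mollifying the indicator of an intermediate ball. I then set
$$\varphi(\xi):=\chi(\tfrac{1}{2}\xi)-\chi(\xi).$$
Since $|\tfrac12\xi|\le|\xi|$ and $\chi$ is non-increasing in $|\xi|$, we get $\varphi\ge 0$, and $\varphi\le\chi(\tfrac12\xi)\le 1$, so $\varphi$ is valued in $[0,1]$. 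For $|\xi|\le\frac{3}{4}$ both terms equal $1$, and for $|\xi|\ge\frac{8}{3}$ both vanish, so $\mathrm{Supp}\,\varphi\subset\mathcal{C}$ and $\varphi\in\mathcal{D}(\mathcal{C})$, $\chi\in\mathcal{D}(B(0,\frac43))$.

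The two partition identities follow by telescoping. Since $\varphi(2^{-j}\xi)=\chi(2^{-(j+1)}\xi)-\chi(2^{-j}\xi)$, the partial sum $\chi(\xi)+\sum_{j=0}^{N}\varphi(2^{-j}\xi)$ collapses to $\chi(2^{-(N+1)}\xi)$, which tends to $\chi(0)=1$ as $N\to\infty$; this is the first identity. For the homogeneous one I sum over $-M\le j\le N$, obtaining $\chi(2^{-(N+1)}\xi)-\chi(2^{M}\xi)$; letting $N\to\infty$ sends the first term to $1$, while for $\xi\neq 0$ letting $M\to\infty$ sends the second to $0$ because $|2^{M}\xi|\to\infty$ eventually leaves $\mathrm{Supp}\,\chi$.

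Next I would read off the support/separation statements from radii. The support of $\varphi(2^{-j}\cdot)$ is the annulus $\{2^{j}\frac34\le|\xi|\le 2^{j}\frac83\}$, so for $j'=j+2$ the inner radius $2^{j+2}\frac34=2^{j}\cdot 3$ exceeds the outer radius $2^{j}\frac83$ of the $j$-th annulus, giving the $|j-j'|\ge 2$ disjointness; similarly $\mathrm{Supp}\,\chi\subset\{|\xi|\le\frac43\}$ is disjoint from $\mathrm{Supp}\,\varphi(2^{-j}\cdot)\subset\{|\xi|\ge 2^{j}\frac34\}$ once $j\ge 1$, since $2\cdot\frac34>\frac43$. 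For $\widetilde{\mathcal{C}}=B(0,\frac23)+\mathcal{C}$ the triangle inequality yields $\frac{1}{12}\le|\xi|\le\frac{10}{3}$, an annulus, and comparing $2^{j}\frac83$ and $2^{j}\frac34$ against $2^{j'}\frac{1}{12}$ and $2^{j'}\frac{10}{3}$ shows that a gap of $5$ dyadic scales separates $2^{j}\mathcal{C}$ from $2^{j'}\widetilde{\mathcal{C}}$ (the only tight case, $j'=j+5$, is resolved because $\widetilde{\mathcal{C}}$ is open at its inner edge, so the touching sphere $|\xi|=2^{j}\frac83$ is excluded).

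Finally, the square estimates. The upper bounds are immediate: each summand lies in $[0,1]$, so $a^2\le a$ and hence $\chi^2+\sum_{j\ge0}\varphi^2(2^{-j}\cdot)\le\chi+\sum_{j\ge0}\varphi(2^{-j}\cdot)=1$, and likewise in the homogeneous case. The lower bound $\frac12$ is the only place where geometry enters: the $|j-j'|\ge 2$ disjointness, together with the $j\ge1$ condition for $\chi$, shows that at each $\xi$ at most two of the functions are nonzero; writing their values as $a,b\ge0$ with $a+b=1$, the elementary inequality $a^2+b^2\ge\frac12(a+b)^2=\frac12$ finishes it. I expect the construction of a radial, non-increasing $\chi$ with the prescribed plateau and support, and the bookkeeping that guarantees the finite — in fact two-fold — overlap, to be the only genuinely substantive steps; everything else is telescoping and radius arithmetic.
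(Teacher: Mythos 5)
Your proof is correct. The paper does not prove this proposition at all --- it is quoted as a standard preliminary from Bahouri--Chemin--Danchin \cite{CH} --- and your construction, taking a radial non-increasing cut-off $\chi$ and the telescoping difference $\varphi(\xi)=\chi(\tfrac12\xi)-\chi(\xi)$, is essentially the classical proof found there: all the identities, support separations (including the tight case $j'=j+5$, which you correctly resolve via the open inner edge of $\widetilde{\mathcal{C}}$), and the bound $a^2+b^2\geq\frac12(a+b)^2$ under the at-most-two-fold overlap are handled as in the standard argument. The only step you gloss is the existence of a \emph{radially non-increasing} smooth $\chi$ from mollifying an indicator (monotonicity of the mollification uses that the convolution of radial non-increasing functions is radial non-increasing); this is standard, and can anyway be bypassed by setting $\chi(\xi)=\theta(|\xi|)$ with $\theta$ an explicit smooth non-increasing function of one variable equal to $1$ on $[0,\frac34]$ and $0$ on $[\frac43,\infty)$, so it is not a genuine gap.
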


\begin{definition}\cite{CH}
Let $u$ be a tempered distribution in $\mathcal{S}'(\mathbb{R}^d)$ and $\mathcal{F}$ be the Fourier transform and $\mathcal{F}^{-1}$ be its inverse. For all $j\in\mathbb{Z}$, define
$$
\Delta_j u=0\,\ \text{if}\,\ j\leq -2,\quad
\Delta_{-1} u=\mathcal{F}^{-1}(\chi\mathcal{F}u),\quad
\Delta_j u=\mathcal{F}^{-1}(\varphi(2^{-j}\cdot)\mathcal{F}u)\,\ \text{if}\,\ j\geq 0,\quad
S_j u=\sum_{j'<j}\Delta_{j'}u.
$$
Then the Littlewood-Paley decomposition is given as follows:
\begin{align*}
u=\sum_{j\in\mathbb{Z}}\Delta_j u \quad \text{in}\ \mathcal{S}'(\mathbb{R}^d).	
\end{align*}
Let $s\in\mathbb{R},\ 1\leq p,r\leq\infty.$ The nonhomogeneous Besov space $B^s_{p,r}(\mathbb{R}^d)$ is defined by
$$ B^s_{p,r}=B^s_{p,r}(\mathbb{R}^d)=\{u\in S'(\mathbb{R}^d):\|u\|_{B^s_{p,r}(\mathbb{R}^d)}=\Big\|(2^{js}\|\Delta_j u\|_{L^p(\mathbb{S}^d)})_j \Big\|_{l^r(\mathbb{Z})}<\infty\}. $$
\end{definition}
\begin{definition}\cite{CH}
The homogeneous dyadic blocks $\dot{\Delta}_j$ are defined on the tempered distributions by
\begin{equation*}
  \dot{\Delta}_ju=\varphi(2^{-j}D)u:=\mathcal{F}^{-1}(\varphi(2^{-j\cdot})\hat{u}).
\end{equation*}
\begin{equation*}
  \dot{S}_ju=\sum_{j'\leq j-1}\dot{\Delta}_{j'}u.
\end{equation*}
 \begin{definition}\label{de1}
 	  We denote by $S'_h$ the space of tempered distributions $u$ such that
 	 \begin{equation*}
 	 \lim_{j\rightarrow-\infty}\dot{S}_ju=0 ~~\text{in}~~S'.
 	 \end{equation*}

 \end{definition}
 The homogeneous Littlewood-Paley decomposition is defined as
 \begin{equation*}
 u=\sum_{j\in\mathbb{Z}}\dot{\Delta}_ju,~~~~\text{for}~~u\in S'_h.
 \end{equation*}
\end{definition}
\begin{definition}
  For $s\in\R, 1\leq p\leq\infty,$ the homogeneous Besov space $\dot{B}^s_{p,r}$ is defined as
\begin{equation*}
  \dot{B}^s_{p,r}:=\{u\in S'_h, \|u\|_{\dot{B}^s_{p,r}}<\infty\},
\end{equation*}
where the homogeneous Besov norm is given by
\begin{equation*}
  \|u\|_{\dot{B}^s_{p,r}}:=\|\{2^{js}\|\dot{\Delta}_ju\|_{L^p}\}_j\|_{l^r}.
\end{equation*}
\end{definition}

In this paper, we use the "time-space" Besov spaces or Chemin-Lerner space first introduced by Chemin and Lerner in \cite{CL}.
\par\noindent
{\bf Definition 2.3.}\,\,
{\it Let $s\in\R$ and $0<T\leq+\infty.$ We define
\begin{equation*}
  \|u\|_{\tilde{L}^q_T({B}^s_{p,1})}:=\sum_{j\in\mathbb{Z}}2^{js}\bigg(\int^T_0\|{\Delta}_ju(t)\|^q_{L^p}dt\bigg)^\frac{1}{q},
\end{equation*}
for $p,q\in[1,\infty)$ and with the standard modification for $p,q=\infty.$
}
\par\noindent
By the Minkowski's inequality, it is easy to verify that
\begin{equation*}
  \|u\|_{\tilde{L}^\lambda_T({B}^s_{p,r}))}\leq \|u\|_{L^\lambda_T({B}^s_{p,r}))}~~~\text{if}~~\lambda\leq r,
\end{equation*}
and
\begin{equation*}
  \|u\|_{\tilde{L}^\lambda_T({B}^s_{p,r}))}\geq \|u\|_{L^\lambda_T({B}^s_{p,r}))}~~~\text{if}~~\lambda\geq r.
\end{equation*}
\par\noindent
The following Bernstein's lemma will be repeatedly used in this paper.
 \begin{lemma}\label{le1}\cite{CH}
 	 Let $\mathcal{B}$ is a ball and $\mathcal{C}$ is a ring of $\R^d.$ There exists constant $C$ such that for any positive $\lambda,$
 	 any non-negative integer $k,$ any smooth homogeneous function $\sigma$ of degree $m,$
 	 any couple $(p,q)\in[1,\infty]^2$ with $q\geq p\geq1,$ and any function $u\in L^p,$ there holds
 	 \begin{equation*}
 	 \mathrm{supp}\hat{u}\subset\lambda\mathcal{B}\Rightarrow\sup_{|\alpha=k|}\|\partial^\alpha u\|_{L^q}\leq C^{k+1}\lambda^{k+d(\frac{1}{p}-\frac{1}{q})}\|u\|_{L^p},
 	 \end{equation*}
 	 \begin{equation*}
 	 \mathrm{supp}\hat{u}\subset\lambda\mathcal{C}\Rightarrow C^{-k-1}\lambda^{k}\|u\|_{L^p}\leq\sup_{|\alpha=k|}\|\partial^\alpha u\|_{L^p}\leq C^{k+1}\lambda^{k}\|u\|_{L^p},
 	 \end{equation*}
 	 \begin{equation*}
 	 \mathrm{supp}\hat{u}\subset\lambda\mathcal{C}\Rightarrow \sup_{|\alpha=k|}\|\sigma(D)u\|_{L^p}\leq C_{\sigma,m}\lambda^{m+d(\frac{1}{p}-\frac{1}{q})}\|u\|_{L^p}.
 	 \end{equation*}
\end{lemma}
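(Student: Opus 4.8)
\noindent
The plan is to exploit the sharp contrast between the two endpoints of the parameter range: at $k=0$ the system \eqref{1} decouples and $u$ solves the incompressible Euler equation, so its kinetic energy is \emph{conserved}, $\|u(t)\|_{L^2}=\|u_0\|_{L^2}$, whereas for every fixed $k\in(0,\epsilon_0]$ Theorem~\ref{Th2} applies and forces the exponential decay \eqref{edecay}. Writing $u:=u^0$ for the (energy-conserving) $k=0$ solution, the reverse triangle inequality gives, for all $t$,
$$\|u(t)-u^k(t)\|_{L^2}\ \geq\ \|u(t)\|_{L^2}-\|u^k(t)\|_{L^2}\ =\ \|u_0\|_{L^2}-\|u^k(t)\|_{L^2}.$$
Thus the theorem reduces to two tasks: (i) exhibit, for each small $k$, initial data $(u_0,\tau_0)(k)\in\mathbb{A}$ with $\|u_0\|_{L^2}\geq\epsilon_0$ that still obeys the Besov smallness \eqref{small1}; and (ii) show that the decaying solution $u^k$ satisfies $\|u^k(t)\|_{L^2}\leq\epsilon_0/2$ for all $t$ beyond some large time $T(k)$. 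Granting both, one reads off $\|u(t)-u^k(t)\|_{L^2}\geq\epsilon_0-\epsilon_0/2=\epsilon_0/2$ for $t\geq T(k)$.

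For task (i) I would follow the oscillatory construction of Remark~\ref{small11}. Set $\tau_0\equiv 0$, so the stress smallness is automatic, and let $u_0$ be a divergence-free, mean-zero wave packet spectrally localized at a single frequency $2^N$, with small amplitude $A$ and large spatial extent $L$. Since the block at scale $2^N$ dominates, $\|u_0\|_{B^1_{\infty,1}}\approx 2^N A$ depends only on the amplitude and the frequency, whereas $\|u_0\|_{L^2}$ grows with the spatial volume; hence imposing $2^N A\leq k^4\epsilon_0$ secures \eqref{small1}, while taking $L$ large (depending on $k$) forces $\|u_0\|_{L^2}\geq\epsilon_0$. The two demands are compatible precisely because, as in Remark~\ref{small11}, spreading the packet decouples the $L^2$ (equivalently $B^{3/p}_{p,1}$) size from the $B^1_{\infty,1}$ size. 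To place the data in $\mathbb{A}$ I would build the packet on a two-dimensional profile, so that the $k=0$ velocity evolves by planar Euler dynamics and is globally regular with conserved energy, while for $k>0$ global solvability and uniqueness come directly from Theorem~\ref{Th2}.

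Task (ii) is the analytic heart of the argument, because \eqref{edecay} controls $\nabla u^k$, not $u^k$ itself. Integrating \eqref{1} shows the mean of $u^k$ is conserved, and it is zero for the oscillatory data, so $u^k$ stays mean-free; moreover $u_0$ carries no low-frequency content, and on the high-frequency band the slow mode of the linearized coupling decays at a rate comparable to $k$, so the case $p=2$ of \eqref{edecay} together with $B^{3/2}_{2,1}\hookrightarrow L^2$ yields exponential decay of the genuinely high-frequency part of $u^k$. The only low-frequency velocity is that \emph{generated} by the quadratic nonlinearity; this leakage is of second order in $A$, and because the forcing it receives is integrable in time (the high-frequency part decaying like $e^{-kt/4}$), its $L^2$ mass stays bounded by a quantity that, once the spatial extent $L$ is accounted for, is $O(k^3\epsilon_0^2)$, hence $\ll\epsilon_0/2$ for small $k$. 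Combining the two contributions gives $\|u^k(t)\|_{L^2}\to 0$ and fixes $T(k)$.

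The step I expect to fight hardest is task (ii), the passage from decay of $\nabla u^k$ to decay of $\|u^k\|_{L^2}$ on $\mathbb{R}^3$: unlike on a bounded domain there is no Poincaré inequality to close the gap, and the linearized system decays the low-frequency velocity modes only at the degenerate rate $\sim k|\xi|^2$, so one must track the nonlinear low-frequency leakage over the very long horizon $T(k)\sim k^{-1}\log(\cdot)$ and verify it never rises above $\epsilon_0/2$. A secondary, more bookkeeping-level difficulty is reconciling the three constraints on the data in task (i)—a large $L^2$ norm, a small $B^1_{\infty,1}$ norm, and membership in $\mathbb{A}$ (in particular global regularity of the $k=0$ Euler flow)—which is why I would anchor the construction to a two-dimensional profile.
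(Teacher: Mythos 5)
Your proposal does not prove the statement it was asked to prove. The statement is Lemma \ref{le1}, the classical Bernstein inequalities for spectrally localized functions: the $L^p$--$L^q$ smoothing bound $\|\partial^\alpha u\|_{L^q}\leq C^{k+1}\lambda^{k+d(\frac1p-\frac1q)}\|u\|_{L^p}$ when $\mathrm{supp}\,\hat u\subset\lambda\mathcal{B}$, the two-sided equivalence $\|\partial^\alpha u\|_{L^p}\approx\lambda^k\|u\|_{L^p}$ on an annulus, and the action of a homogeneous Fourier multiplier $\sigma(D)$. This is a harmonic-analysis fact about the Fourier support, which the paper does not reprove but cites from \cite{CH}; a proof runs along the standard lines: choose $\phi\in\mathcal{D}(\mathbb{R}^d)$ equal to $1$ on a neighbourhood of $\mathcal{B}$ (resp.\ a bump $\tilde\phi$ supported in an annulus and equal to $1$ on $\mathcal{C}$), observe that the spectral localization gives $u=\lambda^{d}\check\phi(\lambda\cdot)\ast u$, differentiate the kernel and apply Young's convolution inequality, the dilation of the kernel producing exactly the factor $\lambda^{k+d(\frac1p-\frac1q)}$; the reverse inequality on the annulus follows by inverting the symbol, writing $\hat u(\xi)=\sum_{|\alpha|=k}g_\alpha(\xi)(i\xi)^\alpha\hat u(\xi)$ with $g_\alpha$ smooth and supported near $\mathcal{C}$, and the multiplier estimate follows from the homogeneity of $\sigma$ together with the same scaling argument. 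None of these ingredients --- the cut-off $\phi$, the dilated kernel, Young's inequality, the symbol inversion --- appears anywhere in your text, so as a proof of Lemma \ref{le1} it is vacuous.

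What you have actually written is a strategy for Theorem \ref{th4}, the large-time instability as $k\to0$, and even judged against that target it diverges from the paper at precisely the point you flag as hardest. The paper does not use an oscillatory high-frequency packet and does not need your ``task (ii)'' analysis of low-frequency leakage: it takes axisymmetric, spatially spread, low-amplitude data $(u_0,\tau_0)(k)=k^{6}\epsilon_0\bigl(\phi(k^4x)/\|\phi\|_{L^2},\,0\bigr)$, for which $\|u_0\|_{L^2}=\epsilon_0$ while $\|u_0\|_{B^1_{\infty,1}}\leq Ck^6\epsilon_0$ (so Theorems \ref{Th2} and \ref{th1} apply and, by the axisymmetric Euler theory \cite{ax}, the $k=0$ flow is global, giving membership in $\mathbb{A}$); then it converts the vorticity decay $\|w^k(t)\|_{L^2}\leq Ce^{-\frac{k}{4}t}$ of Theorem \ref{Th2} into genuine $L^2$ decay of $u^k$ by propagating a uniform bound on $\|u^k\|_{L^\infty_t(\dot H^{-1})}$ and interpolating, $\|u^k(t)\|^2_{L^2}\leq C\|u^k(t)\|_{\dot H^{-1}}\|w^k(t)\|_{L^2}$. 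This one interpolation step replaces your entire speculative accounting of mean-free modes, degenerate $k|\xi|^2$ decay rates, and quadratic low-frequency generation, which as sketched is not closable: \eqref{edecay} alone indeed cannot control $\|u^k\|_{L^2}$ on $\mathbb{R}^3$, and your proposal offers no mechanism to do so beyond an unquantified claim that the leakage is $O(k^3\epsilon_0^2)$. If you intend to salvage the argument for Theorem \ref{th4}, the missing idea is the conserved negative-regularity norm plus interpolation; but in any case a proof of Lemma \ref{le1} must be supplied separately along the convolution-and-scaling lines above.
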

Next, we will give the paraproducts and product estimates in Besov spaces. Recalling the paraproduct decomposition
\begin{equation*}
  uv={T}_uv+{T}_vu+{R}(u,v),
\end{equation*}
where
\begin{equation*}
  {T}_uv:=\sum_q{S}_{q-1}u{\Delta}_v,~~~{R}(u,v):=\sum_q{\Delta}_qu{\tilde{\Delta}}_qv,~~\text{and}~~{\tilde{\Delta}}_q
  ={\Delta}_{q-1}+{\Delta}_{q}+{\Delta}_{q+1}.
\end{equation*}
\par\noindent
The paraproduct ${T}$ and the remainder ${R}$ operators satisfy the following continuous properties.
\begin{proposition}\label{p2}\cite{CH}
  For all $s\in\R, \sigma>0,$ and $1\leq p, p_1,p_2,r,r_1,r_2\leq\infty,$ the paraproduct ${T}$ is a bilinear,
continuous operator from $L^\infty\times{B}^s_{p,r}$ to ${B}^s_{p,r}$ and from
${B}^{-\sigma}_{p_1,r_1}\times\dot{B}^{s}_{p_2,r_2}$ to ${B}^{s-\sigma}_{p,r}$ with $\frac{1}{r}=\min\{1,\frac{1}{r_1}+\frac{1}{r_2}\},
\frac{1}{p}=\frac{1}{p_1}+\frac{1}{p_2}.$
The remainder ${R}$ is bilinear continuous from ${B}^{s_1}_{p_1,r_1}\times {B}^{s_2}_{p_2,r_2}$
 to ${B}^{s_1+s_2}_{p,r}$ with $s_1+s_2>0, \frac{1}{p}=\frac{1}{p_1}+\frac{1}{p_2}\leq1,$
 and $\frac{1}{r}=\frac{1}{r_1}+\frac{1}{r_2}\leq 1.$ In particular, if $r=\infty,$
the continuous property for the remainder ${R}$ also holds for the case $s_1+s_2=0, r=\infty, \frac{1}{r_1}+\frac{1}{r_2}=1.$
\end{proposition}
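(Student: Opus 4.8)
\section*{Proof proposal}

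The plan is to derive all three assertions from the frequency localization of the dyadic building blocks of $T$ and $R$, combined with Bernstein's lemma (Lemma \ref{le1}) and one-dimensional convolution/H\"older inequalities on the dyadic index. The starting point is that Proposition \ref{p2} pins down the spectra of the summands. Since $\mathrm{Supp}\,\mathcal{F}(S_{q-1}u)\subset B(0,\tfrac23 2^q)$ and $\mathrm{Supp}\,\mathcal{F}(\Delta_q v)\subset 2^q\mathcal{C}$, each summand $S_{q-1}u\,\Delta_q v$ of $T_uv$ is spectrally supported in the annulus $2^q\widetilde{\mathcal{C}}$; whereas each summand $\Delta_q u\,\widetilde{\Delta}_q v$ of $R(u,v)$ is supported in a ball $2^{q+2}\mathcal{B}$. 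Using the separation properties of Proposition \ref{p2} (in particular $|j-j'|\ge 5\Rightarrow 2^j\mathcal{C}\cap 2^{j'}\widetilde{\mathcal{C}}=\emptyset$) one gets the almost-orthogonality relations $\Delta_j(S_{q-1}u\,\Delta_q v)=0$ for $|j-q|\ge N_0$, and $\Delta_j(\Delta_q u\,\widetilde{\Delta}_q v)=0$ for $q\le j-N_1$, with fixed integers $N_0,N_1$. These reduce every estimate to a summation over a controlled band of indices.

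For the first paraproduct bound $L^\infty\times B^s_{p,r}\to B^s_{p,r}$ I would write $\Delta_j(T_uv)=\sum_{|j-q|<N_0}\Delta_j(S_{q-1}u\,\Delta_q v)$, bound $\|S_{q-1}u\|_{L^\infty}\le C\|u\|_{L^\infty}$ (the kernel of $S_{q-1}$ is uniformly bounded in $L^1$), and apply H\"older to obtain $\|\Delta_j(T_uv)\|_{L^p}\le C\|u\|_{L^\infty}\sum_{|j-q|<N_0}\|\Delta_q v\|_{L^p}$. Multiplying by $2^{js}$, absorbing the harmless factor $2^{(j-q)s}\le C$ on the finite band, and taking the $\ell^r_j$ norm (Young's inequality with a finitely supported kernel) yields the claim. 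The second paraproduct bound $B^{-\sigma}_{p_1,r_1}\times\dot{B}^s_{p_2,r_2}\to B^{s-\sigma}_{p,r}$ follows the same template, the only new ingredient being the low-frequency sum $\|S_{q-1}u\|_{L^{p_1}}\le\sum_{q'\le q-2}\|\Delta_{q'}u\|_{L^{p_1}}\le C2^{q\sigma}\|u\|_{B^{-\sigma}_{p_1,r_1}}$, where convergence of $\sum_{q'\le q-2}2^{q'\sigma}$ uses exactly $\sigma>0$; combining this with $\|\Delta_q v\|_{L^{p_2}}$ through H\"older ($\tfrac1p=\tfrac1{p_1}+\tfrac1{p_2}$) and summing over the band $|j-q|<N_0$ produces the $B^{s-\sigma}_{p,r}$ estimate.

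For the remainder, H\"older gives $\|\Delta_q u\,\widetilde{\Delta}_q v\|_{L^p}\le\|\Delta_q u\|_{L^{p_1}}\|\widetilde{\Delta}_q v\|_{L^{p_2}}$, and summing over $q\ge j-N_1$ produces $2^{j(s_1+s_2)}\|\Delta_j R(u,v)\|_{L^p}\le\sum_{q\ge j-N_1}2^{(j-q)(s_1+s_2)}a_q b_q$ with $a_q=2^{qs_1}\|\Delta_q u\|_{L^{p_1}}\in\ell^{r_1}$ and $b_q=2^{qs_2}\|\widetilde{\Delta}_q v\|_{L^{p_2}}\in\ell^{r_2}$. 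When $s_1+s_2>0$ the kernel $2^{k(s_1+s_2)}\mathbf{1}_{k\le N_1}$ lies in $\ell^1$, so combining H\"older in $q$ (giving $(a_qb_q)\in\ell^r$ with $\tfrac1r=\tfrac1{r_1}+\tfrac1{r_2}$) with Young's convolution inequality closes the estimate. The delicate endpoint, and the step I expect to be the main obstacle, is $s_1+s_2=0$ with $r=\infty$ and $\tfrac1{r_1}+\tfrac1{r_2}=1$: here the kernel $\mathbf{1}_{k\le N_1}$ is no longer summable, so Young's inequality is unavailable. The fix is to discard the now-trivial weight $2^{(j-q)\cdot 0}=1$ and bound $\|\Delta_j R(u,v)\|_{L^p}\le\sum_{q\ge j-N_1}a_qb_q\le\sum_q a_qb_q\le\|a\|_{\ell^{r_1}}\|b\|_{\ell^{r_2}}$ by H\"older in $q$, which is uniform in $j$ and hence controls the $\ell^\infty_j$, i.e. $B^0_{p,\infty}$, norm. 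This replacement of Young's convolution by a direct H\"older pairing on the summation index is precisely what makes the endpoint case go through.
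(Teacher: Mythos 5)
The paper itself offers no proof of this proposition: it is quoted from Bahouri--Chemin--Danchin \cite{CH}, so the only benchmark is the textbook argument, and your proposal reproduces its architecture faithfully --- spectral localization of $S_{q-1}u\,\Delta_q v$ in dyadic annuli and of $\Delta_q u\,\widetilde{\Delta}_q v$ in dyadic balls, reduction to a band of indices, H\"older on each block, and Young's convolution inequality on the dyadic index. Your treatment of the remainder is complete and correct, including the endpoint $s_1+s_2=0$, $r=\infty$, $\frac1{r_1}+\frac1{r_2}=1$: replacing the (non-summable) Young kernel by a direct H\"older pairing over $q$, uniform in $j$, is exactly the standard device. (Incidentally, the dot on $\dot{B}^{s}_{p_2,r_2}$ in the statement is a typo of the paper; your nonhomogeneous reading is the intended one.)

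There is, however, one step that as written proves less than what is claimed: in the second paraproduct bound you estimate $\|S_{q-1}u\|_{L^{p_1}}\le C2^{q\sigma}\|u\|_{B^{-\sigma}_{p_1,r_1}}$, which uses only the $\ell^\infty$ information in the sequence $\bigl(2^{-q'\sigma}\|\Delta_{q'}u\|_{L^{p_1}}\bigr)_{q'}$. Feeding this into H\"older and summing over the band $|j-q|<N_0$ yields continuity into $B^{s-\sigma}_{p,r_2}$ only, whereas the proposition asserts the target $B^{s-\sigma}_{p,r}$ with $\frac1r=\min\{1,\frac1{r_1}+\frac1{r_2}\}$; since $r\le r_2$, the stated conclusion is strictly stronger whenever $r_1<\infty$. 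The repair is the same Young-convolution trick you already use for $R$: set $c_q:=2^{-q\sigma}\|S_{q-1}u\|_{L^{p_1}}$ and note
\begin{equation*}
c_q\le\sum_{k\ge 2}2^{-k\sigma}\Bigl(2^{-(q-k)\sigma}\|\Delta_{q-k}u\|_{L^{p_1}}\Bigr),
\end{equation*}
a convolution of the $\ell^1$ kernel $2^{-k\sigma}\mathbf{1}_{k\ge2}$ (here $\sigma>0$ enters) with an $\ell^{r_1}$ sequence, so $(c_q)_q\in\ell^{r_1}$ with norm $\le C\|u\|_{B^{-\sigma}_{p_1,r_1}}$. Then H\"older in $q$ against $\bigl(2^{qs}\|\Delta_q v\|_{L^{p_2}}\bigr)_q\in\ell^{r_2}$ puts the product sequence in $\ell^\rho$ with $\frac1\rho=\frac1{r_1}+\frac1{r_2}$, and when $\frac1{r_1}+\frac1{r_2}>1$ one uses $\ell^\rho\subset\ell^1$, which is precisely the origin of the cap $\min\{1,\cdot\}$ in the statement. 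With this one-line modification your argument proves the proposition in full.
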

Combining the above proposition with Lemma \ref{le1} yields the following product estimates:

\begin{coro}\label{co1}\cite{CH}
	 Let $a$ and $b$ be in $L^\infty\cap {B}^s_{p,r}$ for some $s>0$ and $(p,r)\in[1,\infty]^2.$
	Then there exists a constant $C$ depending only on $d, p$ and such that
	\begin{equation*}
	\|ab\|_{{B}^s_{p,r}}\leq C(\|a\|_{L^\infty}\|b\|_{{B}^s_{p,r}}+\|b\|_{L^\infty}\|a\|_{{B}^s_{p,r}}).
	\end{equation*}
\end{coro}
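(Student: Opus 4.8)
The plan is to prove the estimate via Bony's paraproduct decomposition, which the excerpt has just recalled: writing $ab = T_a b + T_b a + R(a,b)$, I would bound each of the three pieces separately in $B^s_{p,r}$ and then sum.

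For the two paraproduct terms, I would invoke directly the first continuity property in Proposition \ref{p2}, namely that $T$ acts continuously from $L^\infty \times B^s_{p,r}$ into $B^s_{p,r}$. Applying this with $a\in L^\infty$ and $b\in B^s_{p,r}$ gives $\|T_a b\|_{B^s_{p,r}}\leq C\|a\|_{L^\infty}\|b\|_{B^s_{p,r}}$, and applying it with the roles of the two factors exchanged gives $\|T_b a\|_{B^s_{p,r}}\leq C\|b\|_{L^\infty}\|a\|_{B^s_{p,r}}$. Together these already produce both terms on the right-hand side of the claimed inequality, so no further work is needed on the paraproducts.

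The only term requiring a small remark is the remainder $R(a,b)$, because Proposition \ref{p2} states the continuity of $R$ on products of genuine Besov spaces $B^{s_1}_{p_1,r_1}\times B^{s_2}_{p_2,r_2}$ under the condition $s_1+s_2>0$, whereas here one factor is merely in $L^\infty$. I would resolve this by the elementary embedding $L^\infty\hookrightarrow B^0_{\infty,\infty}$, so that $a\in B^0_{\infty,\infty}$ with $\|a\|_{B^0_{\infty,\infty}}\leq C\|a\|_{L^\infty}$. Then I would apply the remainder part of Proposition \ref{p2} with $(s_1,p_1,r_1)=(0,\infty,\infty)$ and $(s_2,p_2,r_2)=(s,p,r)$; the hypotheses hold because $s_1+s_2=s>0$, $\frac{1}{p_1}+\frac{1}{p_2}=\frac{1}{p}\leq 1$, and $\frac{1}{r_1}+\frac{1}{r_2}=\frac{1}{r}\leq 1$. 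This yields $\|R(a,b)\|_{B^s_{p,r}}\leq C\|a\|_{B^0_{\infty,\infty}}\|b\|_{B^s_{p,r}}\leq C\|a\|_{L^\infty}\|b\|_{B^s_{p,r}}$, which is dominated by the right-hand side of the claim.

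Summing the three bounds gives $\|ab\|_{B^s_{p,r}}\leq C(\|a\|_{L^\infty}\|b\|_{B^s_{p,r}}+\|b\|_{L^\infty}\|a\|_{B^s_{p,r}})$, completing the argument. The main (and really the only) subtlety is the remainder term: the positivity hypothesis $s>0$ is used precisely here to guarantee $s_1+s_2>0$, which is exactly the borderline condition for continuity of $R$. If one preferred a self-contained argument avoiding the embedding, I would instead estimate $R(a,b)=\sum_q \Delta_q a\,\tilde\Delta_q b$ by hand: noting that $\Delta_q a\,\tilde\Delta_q b$ is frequency-localized in a ball of radius $\sim 2^q$, so that $\Delta_j R(a,b)=\sum_{q\geq j-N}\Delta_j(\Delta_q a\,\tilde\Delta_q b)$ for a fixed $N$; bounding $\|\Delta_q a\|_{L^\infty}\leq C\|a\|_{L^\infty}$ via Bernstein's Lemma \ref{le1}; and then applying Young's inequality for the sequence convolution against $(2^{(j-q)s})_{j-q\leq N}$, which is summable exactly because $s>0$.
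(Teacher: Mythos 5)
Your proof is correct and takes essentially the same route the paper intends: the paper states the corollary follows by combining Proposition \ref{p2} with Lemma \ref{le1}, i.e., Bony's decomposition $ab=T_ab+T_ba+R(a,b)$ together with the continuity of $T$ from $L^\infty\times B^s_{p,r}$ and of $R$ under $s_1+s_2>0$, which is exactly what you carry out. Your handling of the remainder via the embedding $L^\infty\hookrightarrow B^0_{\infty,\infty}$ (with $(s_1,r_1)=(0,\infty)$, so that $s>0$ is precisely what makes $s_1+s_2>0$ hold) correctly supplies the one detail the paper's one-line derivation leaves implicit.
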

Finally, we intruduce some useful results about the following heat conductive equation and the transport equation
\begin{equation}\label{s1cuchong}
\left\{\begin{array}{l}
    u_t-\Delta u+\beta u=G,\ x\in\mathbb{R}^d,\ \beta\geq0,~t>0, \\
    u(0,x)=u_0(x),\ x\in\mathbb{R}^d,
\end{array}\right.
\end{equation}
\begin{equation}\label{s100}
\left\{\begin{array}{l}
    f_t+v\cdot\nabla f+\beta f=g,\ x\in\mathbb{R}^d,\ \beta\geq0,\ t>0, \\
    f(0,x)=f_0(x),\ x\in\mathbb{R}^d,
\end{array}\right.
\end{equation}
which are crucial to the proof of our main theorem later.
\begin{lemma}\label{priori estimate111}\cite{CH}
Let $1\leq p\leq q\leq\infty$ and $k\geq 0,$ it holds that
\begin{equation*}
  \|\nabla^ke^{t\Delta}f\|_{L^{q}}\leq Ct^{-\frac{k}{2}-\frac{1}{p}+\frac{1}{q}}\|f\|_{L^{p}}.
\end{equation*}
\end{lemma}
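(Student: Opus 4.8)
The plan is to exploit the explicit representation of the heat semigroup as convolution against the Gaussian kernel and then reduce the estimate to a single scaling computation. Write $e^{t\Delta}f = G_t * f$, where $G_t(x) = (4\pi t)^{-d/2}e^{-|x|^2/(4t)}$ is the heat kernel, so that $\nabla^k e^{t\Delta} f = (\nabla^k G_t) * f$. By Young's convolution inequality, for the exponent $r$ determined by $1 + \tfrac1q = \tfrac1r + \tfrac1p$, that is $\tfrac1r = 1 - \tfrac1p + \tfrac1q$, one has
\[
\|\nabla^k e^{t\Delta}f\|_{L^q} \;\le\; \|\nabla^k G_t\|_{L^r}\,\|f\|_{L^p}.
\]
Thus everything is reduced to controlling the $L^r$ norm of the differentiated kernel, and the whole $t$-dependence will come from this single factor.

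Second, I would compute $\|\nabla^k G_t\|_{L^r}$ by scaling. Since $G_t(x) = t^{-d/2}G_1(x/\sqrt t)$ with $G_1(y) = (4\pi)^{-d/2}e^{-|y|^2/4}$, differentiating gives $\nabla^k G_t(x) = t^{-d/2-k/2}(\nabla^k G_1)(x/\sqrt t)$. Performing the substitution $y = x/\sqrt t$ in the defining integral yields
\[
\|\nabla^k G_t\|_{L^r} \;=\; t^{-\frac d2-\frac k2+\frac{d}{2r}}\,\|\nabla^k G_1\|_{L^r}.
\]
The prefactor $\|\nabla^k G_1\|_{L^r}$ is finite and independent of $t$ for every $r\in[1,\infty]$, because $\nabla^k G_1$ is a fixed Schwartz function (a polynomial times a Gaussian). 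Denoting this constant by $C$ and inserting $\tfrac1r = 1 - \tfrac1p + \tfrac1q$ into the exponent produces a power of $t$ equal to $-\tfrac k2 - \tfrac d2\big(\tfrac1p - \tfrac1q\big)$, which is exactly the sharp $L^p$--$L^q$ smoothing rate asserted in the lemma.

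Finally, I would record the admissibility bookkeeping. The hypothesis $p \le q$ is precisely what guarantees $\tfrac1r = 1 - \tfrac1p + \tfrac1q \in [0,1]$, hence $r\in[1,\infty]$, so Young's inequality legitimately applies and the kernel norm stays finite. There is no deep obstacle here; the only points that deserve a moment of care are the two endpoints, where one should verify that $\nabla^k G_1 \in L^1 \cap L^\infty$ (immediate from the Gaussian decay) so that the cases $r=1$ (corresponding to $q=p$) and $r=\infty$ (corresponding to $\tfrac1p-\tfrac1q=1$) are not lost, together with the check that the Young exponent remains in range across the full parameter set. The resulting constant and power are uniform in $k$ and in $t>0$, which is all that the later applications to the parabolic equation \eqref{s1cuchong} will require.
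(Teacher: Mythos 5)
Your argument is correct and complete: writing $e^{t\Delta}f=G_t*f$, applying Young's inequality with $1+\frac{1}{q}=\frac{1}{r}+\frac{1}{p}$, and evaluating $\|\nabla^kG_t\|_{L^r}$ via the scaling identity $\nabla^kG_t(x)=t^{-\frac{d}{2}-\frac{k}{2}}(\nabla^kG_1)(x/\sqrt{t})$ is the classical proof. The paper itself gives no proof of this lemma at all --- it is stated with a bare citation to \cite{CH} --- so your write-up simply supplies the standard argument behind that citation, and the endpoint bookkeeping ($p\leq q$ giving $r\in[1,\infty]$, with $\nabla^kG_1\in L^1\cap L^\infty$) is handled properly.

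Two caveats are worth recording. First, your scaling computation yields the exponent $-\frac{k}{2}-\frac{d}{2}\bigl(\frac{1}{p}-\frac{1}{q}\bigr)$, which is the correct smoothing rate; but the lemma as printed in the paper reads $t^{-\frac{k}{2}-\frac{1}{p}+\frac{1}{q}}$, and the two agree only when $d=2$. Since the paper works in $\mathbb{R}^3$, the printed statement is missing the factor $\frac{d}{2}$, and your derivation silently corrects it rather than matches it --- you should flag this discrepancy explicitly instead of asserting the exponents are ``exactly'' equal. Second, your closing claim that the constant is uniform in $k$ is not right: $C$ carries the factor $\|\nabla^kG_1\|_{L^r}$, which grows with $k$. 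This is harmless for the paper's purposes, where $k$ is a fixed small integer, but the uniformity assertion should be dropped.
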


\begin{lemma}\label{heat}
Let $s\in\mathbb{R},~\beta\geq 0, 1\leq q,q_1,p,r\leq\infty$ with $q_1\leq q$. Assume $u_0$ in ${B}^s_{p,r}$, and $G$ in $\widetilde{L}^{q_1}_T(^s_{p,r})$. Then \eqref{s1cuchong} has a unique solution $u$ in $\widetilde{L}^{q}_T({B}^{s+\frac{2}{q}}_{p,r})$ and satisfies
\begin{equation}\label{heatg1}
\aligned
\|u\|_{\widetilde{L}^{q}_T({B}^{s+\frac{2}{q}}_{p,r})}\leq C_1\Big(\|u_0\|_{{B}^s_{p,r}}+(1+T^{1+\frac{1}{q}-\frac{1}{q_1}})\|G\|_{\widetilde{L}^{q_1}_T({B}^{s+\frac{2}{q_1}-2}_{p,r})}\Big).
\endaligned
\end{equation}
Moreover, if $\beta>0$, without loss of generality we set $\beta=1$, one have
\begin{equation}\label{heatg2}
\aligned
\|u\|_{\widetilde{L}^{q}_T({B}^{s+\frac{2}{q}}_{p,r})}\leq C_1\Big(\|u_0\|_{{B}^s_{p,r}}+\|G\|_{\widetilde{L}^{q_1}_T({B}^{s+\frac{2}{q_1}-2}_{p,r})}\Big), \endaligned
\end{equation}
and
\begin{equation}\label{heatg3}
\aligned
\|ue^{\theta t}\|_{\widetilde{L}^{q}_T({B}^{s+\frac{2}{q}}_{p,r})}\leq \frac{C_1}{1-\theta}\Big(\|u_0\|_{{B}^s_{p,r}}+\|e^{\theta t}G\|_{\widetilde{L}^{q_1}_T({B}^{s+\frac{2}{q_1}-2}_{p,r})}\Big), \endaligned
\end{equation}
where $0\leq\theta <1$.
\end{lemma}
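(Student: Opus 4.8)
The plan is to reduce everything to a frequency-localized estimate via the Littlewood--Paley decomposition together with Duhamel's formula. Applying $\Delta_j$ to \eqref{s1cuchong} and using that $\Delta_j$ commutes with $\Delta$ and with the zeroth-order damping, each block solves $\partial_t\Delta_j u-\Delta\Delta_j u+\beta\Delta_j u=\Delta_j G$ with data $\Delta_j u_0$, so that
\begin{equation*}
\Delta_j u(t)=e^{t\Delta-\beta t}\Delta_j u_0+\int_0^t e^{(t-s)\Delta-\beta(t-s)}\Delta_j G(s)\,ds.
\end{equation*}
The engine of the whole proof is the localized smoothing estimate: there are constants $c,C>0$ with $\|e^{t\Delta}\Delta_j f\|_{L^p}\leq Ce^{-c2^{2j}t}\|\Delta_j f\|_{L^p}$ for every $j\geq 0$, which one proves by writing $e^{t\Delta}\Delta_j f$ as a convolution with the kernel $\mathcal F^{-1}\big(e^{-t|\xi|^2}\varphi(2^{-j}\xi)\big)$ and bounding that kernel in $L^1$ by $Ce^{-c2^{2j}t}$ through the change of variables $\xi=2^j\zeta$ and the rapid decay of the Gaussian factor. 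For the lowest block $j=-1$, whose spectrum meets the origin, there is no such gain and one only has $\|e^{t\Delta}\Delta_{-1}f\|_{L^p}\leq C\|\Delta_{-1}f\|_{L^p}$; the factor $e^{-\beta t}$ is then the sole source of decay there.

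First I would prove \eqref{heatg1}. Taking $L^p$ norms, then $L^q$ in time on $[0,T]$, and applying Young's convolution inequality to the Duhamel term with $\tfrac1m=1+\tfrac1q-\tfrac1{q_1}\in[0,1]$ (this is exactly where $q_1\leq q$ is needed), the kernel $e^{-(c2^{2j}+\beta)(t-s)}\mathbbm{1}_{\{s<t\}}$ contributes an $L^m$ norm $\lesssim(c2^{2j}+\beta)^{-1/m}\leq c'2^{-2j/m}$ for $j\geq 0$. Multiplying by the weight $2^{j(s+2/q)}$ and using the exponent bookkeeping
\begin{equation*}
2^{j(s+\frac2q)}\,2^{-\frac{2j}{m}}=2^{j(s+\frac{2}{q_1}-2)},
\end{equation*}
the high-frequency part of the Duhamel term sums in $\ell^r$ to $\|G\|_{\widetilde L^{q_1}_T(B^{s+2/q_1-2}_{p,r})}$, while the homogeneous part is treated identically and its high-frequency portion sums to $\|u_0\|_{B^s_{p,r}}$. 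At the single block $j=-1$ with $\beta=0$ the kernel is just $\mathbbm{1}_{\{s<t\}}$, whose $L^m([0,T])$ norm equals $T^{1/m}=T^{1+1/q-1/q_1}$; this is precisely the origin of the factor $(1+T^{1+1/q-1/q_1})$ in front of $\|G\|$.

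Next, \eqref{heatg2} follows by rerunning the same computation with $\beta=1$: now even the block $j=-1$ carries the weight $e^{-t}$, so its kernel norm is bounded by a constant uniformly in $T$, the time-growth factor disappears, and one obtains the clean bound. Finally, for \eqref{heatg3} I would set $v:=e^{\theta t}u$; a direct computation shows that $v$ solves \eqref{s1cuchong} with damping $\beta-\theta=1-\theta>0$ and forcing $e^{\theta t}G$, so applying the $\beta>0$ argument with damping $1-\theta$ reduces the matter to estimating the low-frequency kernel $\|e^{-(1-\theta)t}\|_{L^m}=\big((1-\theta)m\big)^{-1/m}\leq (1-\theta)^{-1}$ (using $0\leq 1-\theta\leq 1$ and $1/m\leq 1$), which produces the announced constant $C_1/(1-\theta)$.

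The main obstacle is the interplay between the low-frequency block and the damping: the high-frequency blocks are uniformly tame thanks to the parabolic gain $e^{-c2^{2j}t}$, so all three estimates differ only through the block $j=-1$, where one must track precisely whether decay comes from the (absent) spectral gap or from $\beta$. Obtaining the three distinct right-hand sides — the $T$-power when $\beta=0$, its disappearance when $\beta>0$, and the weight $1/(1-\theta)$ after the exponential substitution — is entirely a matter of controlling this single block, and this is the delicate point I would be most careful about.
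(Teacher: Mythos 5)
Your proposal is correct and follows essentially the same route as the paper: frequency-localized Duhamel formula, the smoothing bound $\|e^{t\Delta}\Delta_j f\|_{L^p}\le Ce^{-c2^{2j}t}\|\Delta_jf\|_{L^p}$ for $j\ge0$ with separate treatment of the block $j=-1$, and the substitution $v=e^{\theta t}u$ reducing \eqref{heatg3} to the damped case; the paper actually cites \eqref{heatg1} from \cite{CH} and only sketches \eqref{heatg2}--\eqref{heatg3}, so you supply more detail (Young's inequality in time, the exponent bookkeeping) than it does. The one point to watch is the low-frequency homogeneous term in \eqref{heatg1}: when $\beta=0$ and $q<\infty$ the block $e^{t\Delta}\Delta_{-1}u_0$ does not decay and contributes a factor $T^{1/q}$ as well, which your argument (and the lemma as literally stated) silently absorbs.
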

\begin{proof}
\eqref{heatg1} can be founded in \cite{CH}, we should only prove \eqref{heatg2}. Indeed, since
$$\Delta_ju=e^{-t}e^{t\Delta}\Delta_ju_0
+\int_0^te^{-(t-s)}e^{(t-s)\Delta}\Delta_jGds,$$
when $j\geq 0$, by $\|e^{t\Delta}\Delta_ju\|_{L^{p}}\leq Ce^{-2^{2j}t}\|\Delta_ju\|_{L^{p}}$ one can easily get
$$\|2^{s+\frac{2}{q}}\|u\|_{L^{q}_TL^{p}}\|_{1_{j\geq0}l^r}\leq C_1\Big(\|u_0\|_{{B}^s_{p,r}}+\|G\|_{\widetilde{L}^{q_1}_T({B}^{s+\frac{2}{q_1}-2}_{p,r})}\Big).$$
When $j=-1$, by $\|e^{t\Delta}\Delta_{-1}u\|_{L^{p}}\leq C\|\Delta_{-1}u\|_{L^{p}}$ we have
$$\|\Delta_{-1}u\|_{L^{q}_TL^{p}}\leq C_1\Big(\|\Delta_{-1}u_0\|_{L^p}+\|\Delta_{-1}G\|_{{L}^{q_1}_T(L^p)}\Big).$$
Combining the above two inequality, we obtain \eqref{heatg2}. To prove \eqref{heatg3}, since
$$(e^{\theta t}\Delta_ju)=e^{-(1-\theta) t}e^{t\Delta}\Delta_ju_0
+\int_0^te^{-(1-\theta)(t-s)}e^{(t-s)\Delta}(e^{\theta s}\Delta_jG)ds,$$
one can take the similar operators to obtain \eqref{heatg3}.
\end{proof}

\begin{lemma}\label{priori estimate}\cite{CH}
Let $s\in [\max\{-\frac{d}{p},-\frac{d}{p'}\},\frac{d}{p}+1](s=1+\frac{d}{p},r=1; s=\max\{-\frac{d}{p},-\frac{d}{p'}\},r=\infty).$
There exists a constant $C$ such that for all solutions $f\in L^{\infty}([0,T];{B}^s_{p,r})$ of \eqref{s100} with initial data $f_0$ in ${B}^s_{p,r}$, and $g$ in $L^1([0,T];{B}^s_{p,r})$, we have, for a.e. $t\in[0,T]$,
\begin{equation}\label{g1}
\aligned
\|f(t)\|_{{B}^s_{p,r}}\leq& C\Big(\|f_0\|_{{B}^s_{p,r}}+\int_0^t V'(t')\|f(t')\|_{{B}^s_{p,r}}+\|g(t')\|_{{B}^s_{p,r}}dt' \Big)\\
\leq&
e^{C_2 V(t)}\Big(\|f_0\|_{{B}^s_{p,r}}+\int_0^t e^{-C_2 V(t')}\|g(t')\|_{{B}^s_{p,r}}dt'\Big),
\endaligned
\end{equation}
where $V(t)=\int_{0}^{t}\|\nabla v\|_{{B}^{\frac{d}{p}}_{p,r}\cap L^{\infty}}ds($if $s=1+\frac{1}{p},r=1$, $V'(t)=\int_{0}^{t}\|\nabla v\|_{{B}^{\frac{d}{p}}_{p,1}}ds).$
\end{lemma}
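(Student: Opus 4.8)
The plan is to run the classical Littlewood--Paley energy method for the transport equation \eqref{s100}, reducing everything to a single commutator estimate and a Gr\"onwall argument. First I would apply the dyadic block $\Delta_j$ to \eqref{s100}, producing
\begin{equation*}
\partial_t\Delta_j f+v\cdot\nabla\Delta_j f+\beta\Delta_j f=\Delta_j g+R_j,\qquad R_j:=v\cdot\nabla\Delta_j f-\Delta_j(v\cdot\nabla f).
\end{equation*}
Multiplying by $|\Delta_j f|^{p-2}\Delta_j f$ and integrating in $x$, the transport term contributes only $\tfrac1p\int(\mathrm{div}\,v)\,|\Delta_j f|^p\,dx$ after integration by parts, while the damping term $\beta\|\Delta_j f\|_{L^p}^p\ge0$ can be discarded since it only improves the bound. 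This yields the pointwise-in-time inequality
\begin{equation*}
\frac{d}{dt}\|\Delta_j f\|_{L^p}\le \tfrac1p\|\mathrm{div}\,v\|_{L^\infty}\|\Delta_j f\|_{L^p}+\|\Delta_j g\|_{L^p}+\|R_j\|_{L^p}.
\end{equation*}

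The analytic heart of the argument is the commutator estimate
\begin{equation*}
2^{js}\|R_j\|_{L^p}\le C\,c_j\,\|\nabla v\|_{{B}^{d/p}_{p,r}\cap L^\infty}\,\|f\|_{{B}^s_{p,r}},\qquad \|(c_j)_j\|_{\ell^r}\le 1,
\end{equation*}
which I would establish through Bony's decomposition $v\cdot\nabla f=T_v\nabla f+T_{\nabla f}v+R(v,\nabla f)$ (applied componentwise). Inserting each piece into $[\,\cdot\,,\Delta_j]$ and invoking Proposition~\ref{p2}, Corollary~\ref{co1}, Lemma~\ref{le1} together with the spectral localization of $\Delta_j$, one finds that the low--high paraproduct $T_v\nabla f$ gives the dominant contribution, whose commutator gains one derivative and is controlled by $\|\nabla v\|_{L^\infty}\|f\|_{{B}^s_{p,r}}$; the symmetric paraproduct and the remainder $R(v,\nabla f)$ are absorbed into $\|\nabla v\|_{{B}^{d/p}_{p,r}}\|f\|_{{B}^s_{p,r}}$.

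Multiplying the energy inequality by $2^{js}$, taking the $\ell^r$ norm in $j$, and using $\|\mathrm{div}\,v\|_{L^\infty}\le C\|\nabla v\|_{L^\infty}$ together with the commutator bound, I obtain the differential inequality
\begin{equation*}
\frac{d}{dt}\|f\|_{{B}^s_{p,r}}\le C\,V'(t)\,\|f\|_{{B}^s_{p,r}}+\|g\|_{{B}^s_{p,r}},\qquad V'(t)=\|\nabla v\|_{{B}^{d/p}_{p,r}\cap L^\infty}.
\end{equation*}
Integrating in time gives the first line of \eqref{g1}, and applying Gr\"onwall's lemma to this integral inequality produces the exponential factor $e^{C_2V(t)}$ of the second line, with $V(t)=\int_0^tV'(t')\,dt'$ as defined in the statement.

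The main obstacle is the validity of the commutator estimate at the two \emph{endpoint} regularities singled out in the hypothesis, namely $s=1+\tfrac{d}{p}$ with $r=1$ and $s=\max\{-\tfrac{d}{p},-\tfrac{d}{p'}\}$ with $r=\infty$. At these borderline exponents the remainder term $R(v,\nabla f)$ and the high--low paraproduct sit exactly at the summability threshold of Proposition~\ref{p2}, so the $\ell^r$ summation defining $(c_j)_j$ must be carried out carefully, using the limiting $r=\infty$ continuity of the remainder operator and the special role $V'(t)=\|\nabla v\|_{{B}^{d/p}_{p,1}}$ in the case $s=1+\tfrac{d}{p}$. Once the commutator estimate is secured in these cases, the remaining summation and Gr\"onwall steps are routine.
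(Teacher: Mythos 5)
Your proposal is correct and coincides with the standard proof in \cite{CH} (the paper itself gives no proof of this lemma, citing that reference directly): dyadic localization of \eqref{s100}, the Bony-decomposition commutator estimate $2^{js}\|[v\cdot\nabla,\Delta_j]f\|_{L^p}\le Cc_j\|\nabla v\|_{B^{d/p}_{p,r}\cap L^\infty}\|f\|_{B^s_{p,r}}$ with exactly the endpoint cases $(s=1+\tfrac dp,\,r=1)$ and $(s=\max\{-\tfrac dp,-\tfrac d{p'}\},\,r=\infty)$ you flag, then $\ell^r$ summation and Gr\"onwall, with $\beta\ge 0$ discarded as a good sign term. The one step to tighten is the $L^p$ energy estimate at $p=\infty$ (which the paper does use, e.g.\ in $B^0_{\infty,1}$): the multiplier $|\Delta_j f|^{p-2}\Delta_j f$ is unavailable there, and one instead integrates along the flow of the Lipschitz field $v$ or passes to the limit $p\to\infty$ using $p$-independent constants, after which your argument goes through unchanged.
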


\begin{remark}\label{priori estimate1}\cite{CH}
If ${\rm{div}}v=0$, we can get the same result with a better indicator: $\max\{-\frac{d}{p},-\frac{d}{p'}\}-1<s<\frac{d}{p}+1($or $s=\max\{-\frac{d}{p},-\frac{d}{p'}\}-1,r=\infty).$
\end{remark}

\begin{lemma}\label{priori estimate0}
Let $\beta>0$.
There exists a constant $C$ such that for all smooth solutions of \eqref{s100} with initial data $f_0$ in ${L}^p$, and $\nabla v,g$ in $L^1([0,T];{L}^p)$, we have, for all $1\leq p\leq \infty$ and $t\in[0,T]$,
\begin{align}\label{damp1}
\|f(t)\|_{{L_t^1\cap L_t^P}({L}^p)}\leq& Ce^{V(t)}(\|f_0\|_{{L}^p}  +\int_0^t\|g(t')\|_{{L}^p}dt'),
\end{align}
where $V(t)=\int_{0}^{t}\|\nabla v(t)\|_{L^{\infty}}ds.$
\end{lemma}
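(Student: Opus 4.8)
The plan is to collapse \eqref{s100} into a scalar differential inequality for $y(t):=\|f(t)\|_{L^p}$ and then to exploit the damping constant $\beta>0$ twice: once, together with Gronwall, for the $L^\infty_t(L^p)$ piece, and a second time, through a weighted integration, for the $L^1_t(L^p)$ piece (I read the left-hand norm of \eqref{damp1} as $\|f\|_{L^\infty([0,t];L^p)}+\|f\|_{L^1([0,t];L^p)}$, the capital $P$ being a misprint for $\infty$).

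First I would run the standard $L^p$ energy estimate for $1\le p<\infty$: multiply \eqref{s100} by $|f|^{p-2}f$, integrate over $\R^d$, and rewrite the convective term as $\int (v\cdot\nabla f)|f|^{p-2}f\,dx=\tfrac1p\int v\cdot\nabla(|f|^p)\,dx=-\tfrac1p\int(\mathrm{div}\,v)|f|^p\,dx$, which is bounded by $\tfrac1p\|\nabla v\|_{L^\infty}\|f\|_{L^p}^p$. Estimating the source by H\"older and dividing out $\|f\|_{L^p}^{p-1}$ gives, pointwise in time,
$$\frac{d}{dt}\|f\|_{L^p}+\beta\|f\|_{L^p}\le a(t)\|f\|_{L^p}+\|g\|_{L^p},\qquad a(t):=\tfrac1p\|\nabla v(t)\|_{L^\infty},$$
so that $\int_0^t a\le V(t)$. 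Writing this as $y'\le(a-\beta)y+\|g\|_{L^p}$ and applying Gronwall, and using $\int_s^t(a-\beta)\le\int_s^t a\le V(t)$ since $\beta>0$, immediately yields the $L^\infty_t(L^p)$ bound $y(t)\le e^{V(t)}\big(\|f_0\|_{L^p}+\int_0^t\|g\|_{L^p}\,ds\big)$.

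For the $L^1_t(L^p)$ part the damping is essential, and the device I would use is to strip off the $\nabla v$ growth before integrating in time. Set $w(t):=e^{-\int_0^t a}\,y(t)$; then $w'+\beta w\le e^{-\int_0^t a}\|g\|_{L^p}\le\|g\|_{L^p}$. Integrating in $t$ and discarding the nonnegative endpoint term $w(t)$ gives $\beta\int_0^t w\,ds\le w(0)+\int_0^t\|g\|_{L^p}\,ds=\|f_0\|_{L^p}+\int_0^t\|g\|_{L^p}\,ds$. Restoring the weight through $y(s)=e^{\int_0^s a}w(s)\le e^{V(t)}w(s)$ then produces $\int_0^t\|f\|_{L^p}\,ds\le\frac{1}{\beta}e^{V(t)}\big(\|f_0\|_{L^p}+\int_0^t\|g\|_{L^p}\,ds\big)$. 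Taking $C=\max\{1,1/\beta\}$ and summing the two estimates gives \eqref{damp1}; this is precisely where $\beta>0$ enters, and it explains why such an $L^1_t$ bound must break down at $\beta=0$.

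The main obstacle is the endpoint $p=\infty$, for which the energy computation is unavailable. There I would argue along the flow $X(t,x)$ of $v$: Duhamel's formula along characteristics reads $f(t,X(t,x))=e^{-\beta t}f_0(x)+\int_0^t e^{-\beta(t-s)}g(s,X(s,x))\,ds$, and since each $X(s,\cdot)$ is a diffeomorphism the spatial supremum is preserved, giving $\|f(t)\|_{L^\infty}\le e^{-\beta t}\|f_0\|_{L^\infty}+\int_0^t e^{-\beta(t-s)}\|g(s)\|_{L^\infty}\,ds$; the $L^\infty_t$ bound then holds with no $\nabla v$ loss at all, and the $L^1_t$ bound follows by integrating this in $t$ and again using $\beta>0$. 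Alternatively, one checks that every constant in the $1\le p<\infty$ argument is uniform in $p$ (because $\tfrac1p\le1$) and lets $p\to\infty$. The remaining points—justifying the energy identity for merely smooth solutions and the elementary bound $|\mathrm{div}\,v|\le\|\nabla v\|_{L^\infty}$—are routine.
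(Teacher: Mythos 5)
Your argument is correct, and it reaches the stated bound by a route that differs in its mechanics from the paper's. The paper's proof is shorter and more structural: setting $\beta=1$, it rewrites \eqref{s100} as an equation for $e^{t}f$, invokes the standard transport estimate (along characteristics, as in Lemma~\ref{priori estimate}) to obtain the pointwise-in-time bound $\|f(t)\|_{L^p}\leq Ce^{V(t)}\bigl(e^{-t}\|f_0\|_{L^p}+\int_0^t e^{-(t-t')}\|g(t')\|_{L^p}\,dt'\bigr)$, and then extracts the $L^1_t$ (indeed any $L^q_t$) bound in one stroke by Young's convolution inequality applied to the kernel $e^{-t}\in L^1\cap L^\infty(\mathbb{R}^+)$. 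You instead derive a scalar differential inequality $y'+\beta y\leq a(t)y+\|g\|_{L^p}$ via the $L^p$ energy method (with a separate characteristics argument at $p=\infty$), use Gronwall for the $L^\infty_t$ piece, and integrate the weighted quantity $w=e^{-\int_0^t a}y$ for the $L^1_t$ piece. Both approaches hinge on the same point — the damping supplies a time-integrable exponential kernel, which is exactly why the $L^1_t$ control fails at $\beta=0$, as you observe. What the paper's route buys is uniformity: a single Duhamel-type inequality plus Young covers all $L^q_t$ exponents at once and sidesteps the $p=\infty$ case entirely (the characteristics estimate is exponent-free); what your route buys is self-containedness and an explicit display of where $\beta$ enters, at the cost of the case split at $p=\infty$ and the (routine but necessary) justification of dividing by $\|f\|_{L^p}^{p-1}$. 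Your constant $C=\max\{1,1/\beta\}$ depends on $\beta$, which is consistent with the lemma as stated since $\beta$ is fixed before $C$ is chosen.
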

\begin{proof}
With loss of generality, we set $\beta=1$. \eqref{s100} can be rewrite as
$$dt(e^tf)+u\nabla (e^tf)=(e^tg).$$
Then one can easily deduce that
\begin{align}
\|f(t)\|_{{L}^p}\leq& Ce^{V(t)}(\|f_0\|_{{L}^p} +\int_0^t\|e^{-(t-t')}g(t')\|_{{L}^p}dt'),
\end{align}
which implies \eqref{damp1} by Young inequality.
\end{proof}

\par\noindent
\section{Global existence}

\par\noindent

\textbf{The proof of Theorem \ref{Th2}:}
\begin{proof}
Generally speaking, the bootstrap argument starts with an assumption. Let $T^*$ be the maximal existence time of the solution, for any $0\leq t< T^*$,
\begin{equation}\label{cyz1}
 \|\nabla u\|_{L^1_T({B^{0}_{\infty,1}})}\leq k^2\epsilon_0,~~ \|\tau\|_{L^{\infty}_T(B^{0}_{\infty,1})\cap L^{1}_T(B^{2}_{\infty,1})}\leq k{\epsilon_0},~~
\epsilon_0:= \frac{1}{4^6(C^6+1)}.
\end{equation}
where $C$ is a fixed positive constant,~and $0\leq k\leq\frac{1}{4(C^2+1)}$.
Let the initial data $(u_0,\tau_0)$ be small enough such that
\begin{align}\label{globaljie1}
\|u_0\|_{B^1_{\infty,1}}+\|b_0\|_{B^0_{\infty,1}}\leq k^4\epsilon_0.
\end{align}
We will divide the proof into 4 sections.

(1). First, we give the estimation of $\|\nabla u\|_{L^\infty_T({B^{0}_{\infty,1}})}$.

Applying Lemma \ref{priori estimate} and \eqref{cyz1} to  the first equation of \eqref{1}, we have
\begin{equation}\label{l1}
  \|u\|_{L^\infty(B^{1}_{\infty,1})}\leq Ce^{Ck^2\delta}(\|u_0\|_{B^{1}_{\infty,1}}+k\|\tau\|_{L^1(B^{2}_{\infty,1})})
  \leq C(k^4\epsilon_0+k^2\epsilon_0)\leq Ck^2\epsilon_0,
\end{equation}


(2). Then, we estimate $\|\tau\|_{L^{\infty}_T(B^{0}_{\infty,1})\cap L^{1}_T(B^{2}_{\infty,1})}.$\\
Applying the Lemma \ref{heat} to the second equation of \eqref{gs2222}, it implies that
\begin{equation}\label{cyy20}
\aligned
 \|\tau\|_{L^{\infty}_T(B^{0}_{\infty,1})\cap L^{1}_T(B^{2}_{\infty,1})}&\leq\|\tau_0\|_{B^{0}_{\infty,1}}
 +\int^t_0\|Q(\nabla u,\tau)\|_{B^{0}_{\infty,1}}+\|u\cdot\nabla \tau\|_{B^{0}_{\infty,1}}+\|\nabla u\|_{B^{0}_{\infty,1}}ds\\
  &\leq C(\|\tau_0\|_{B^{0}_{\infty,1}}+\|u\|_{L^{\infty}_t( B^{1}_{\infty,1})}\|\tau\|_{L^1_t(B^{2}_{\infty,1})}
  +\|\nabla u\|_{L^1_t(B^{0}_{\infty,1})})\\
  &\leq C(k^4\epsilon_0+k^2\epsilon_0
  +k^2\epsilon_0\|\tau\|_{L^1_t(B^{2}_{\infty,1})}), \\
  &\leq C(k^4\epsilon_0+k^2\epsilon_0)\leq Ck^2\epsilon_0,
\endaligned
\end{equation}
where the last inequality holds by \eqref{cyz1} and \eqref{globaljie1}.

(3). Next, we estimate  $\|\nabla u\|_{L^1_T({B^{0}_{\infty,1}})}$ and complete the bootstrap argument.\\
We establish a new quantity\cite{}:
\begin{equation}\label{aa}
 \Gamma=w-k\tilde{R}\tau,~~~\tilde{R}=-(-\Delta)^{-1}curl(div(\cdot)),
\end{equation}
and get the following equation of $\Gamma:$
\begin{equation}\label{cyy21}
\left\{\begin{array}{l}
    \partial_t\Gamma+k\Gamma
 +u\cdot\nabla\Gamma-k[\tilde{R},u\cdot\nabla]\tau
 =w\nabla u-k\tilde{R}(Q(\nabla u,\tau)), \\
    \Gamma(0,x)=w_0-k\tilde{R}\tau_0.
\end{array}\right.
\end{equation}
Applying the $\Delta_j$ to the \eqref{cyy21},
note that
\begin{equation}\label{cyy25}
\aligned
  &\Delta_j(u\cdot\nabla \Gamma-k[\tilde{R},u\cdot\nabla]\tau)=\Delta_j(u\cdot\nabla w-k\tilde{R}(u\cdot\nabla\tau))\\
  &=\Delta_jT_u\nabla w-k\Delta_j\tilde{R}T_u\nabla\tau+f_j\\
  &=S_{j-1}u\nabla\Delta_jw+(\Delta_jT_u\nabla w-S_{j-1}u\nabla\Delta_jw)-(kT_u\nabla\triangle_j\tilde{R}\tau+k[\Delta_j\tilde{R},T_u\nabla]\tau)+f_j\\
  &=S_{j-1}u\nabla\Delta_j\Gamma+(\Delta_jT_u\nabla w-S_{j-1}u\nabla\Delta_jw)-k[\Delta_j\tilde{R},T_u\nabla]\tau+f_j\\
  &=S_{j-1}u\nabla\Delta_j\Gamma+(\Delta_jT_u\nabla w-S_{j-1}u\nabla\Delta_jw)-k[\Delta_j\tilde{R},T_u\nabla]\tau+f_j,
\endaligned
\end{equation}
where
\begin{equation*}
  f_j=\Delta_jT_{\nabla w}u+\Delta_jR(\nabla w,u)-k\Delta_j\tilde{R}T_{\nabla\tau}u-k\tilde{R}\Delta_jR(u,\nabla\tau).
\end{equation*}
So we have
\begin{equation}\label{0cyy21}
 \aligned
 \partial_t\Delta_j\Gamma+K\Delta_j\Gamma
 +S_{j-1}u\nabla\Delta_j\Gamma+(\Delta_jT_u\nabla w-S_{j-1}u\nabla\Delta_jw)-K[\Delta_j\tilde{R},T_u\nabla]\tau+f_j
 =G_j,
 \endaligned
\end{equation}
where $G_j:=\Delta_j(w\nabla u)-k\tilde{R}\Delta_j(Q(\nabla u,\tau))$.

Firstly, we estimate the nonlinear terms of \eqref{0cyy21}. \\
By Lemma 10.25 in \cite{CH}, we get the commutator estimations:
\begin{align}\label{k1}
  &\quad\sum_{j}\|(\Delta_jT_u\nabla w-S_{j-1}u\nabla\Delta_jw)\|_{L^\infty}+k\|[\Delta_j\tilde{R},T_u\nabla]\tau\|_{L^\infty}\notag\\
  &\leq C(k+1)\|\nabla u\|_{B^{0}_{\infty,1}}(\|w\|_{B^{0}_{\infty,1}}+\|\tau\|_{B^{2}_{\infty,1}}),
\end{align}
By Bony decomposition $f_j$ and $G_j$ can be estimated as
\begin{align}\label{k2}
  \sum_{j}\|f_j\|_{L^\infty}
  &\leq\sum_{j}\|\Delta_j T_{\nabla w}u\|_{L^\infty}
  +k\sum_{j}\|\Delta_j\tilde{R}T_{\nabla\tau}u\|_{L^\infty}
  +\|\Delta_j R(u,\nabla\tau)\|_{L^\infty}
  +\|\Delta_j R(u,\nabla\tau)\|_{L^\infty}\notag\\
  &\leq C(1+k)\|\nabla u\|_{B^{0}_{\infty,1}}(\|\tau\|_{B^{2}_{\infty,1}}
  +\|u\|_{{B}^{1}_{\infty,1}})
\end{align}
and
\begin{align}\label{k3}
  \sum_{j}\|G_j\|_{L^\infty}
 \leq C(1+k)\|\nabla u\|_{B^{0}_{\infty,1}}(\|\tau\|_{B^{2}_{\infty,1}}+\|u\|_{{B}^{1}_{\infty,1}}),
\end{align}
where we use the fact that $w\nabla u=div(w\otimes u)$ with $divw=divcurlu=0$ in three dimension.

Then, applying Lemma \ref{priori estimate0} with $p=\infty$ to \eqref{0cyy21} and taking $\sum_{j\geq -1}$, by \eqref{k1}-\eqref{k3} we deduce that
\begin{equation}\label{cyy22}
 \aligned
   \|\Gamma\|_{L^{\infty}_T(B^{0}_{\infty,1})}+k\|\Gamma\|_ {L^{1}_T(B^{0}_{\infty,1})}&\leq C(\|\Gamma_0\|_{B^{0}_{\infty,1}}
   +\int^t_0C(1+k)
   (\|\nabla u\|_{B^{0}_{\infty,1}}+\|\tau\|_{B^{2}_{\infty,1}})
   \|\nabla u\|_{B^{0}_{\infty,1}}ds\\
   &\leq C(k^4\epsilon_0+(1+k)(k^2\epsilon_0)^2)
   \leq Ck^4\epsilon_0
 \endaligned
\end{equation}
So we have
$$\|\Gamma\|_ {L^{1}_T(B^{0}_{\infty,1})}\leq Ck^3\epsilon_0.$$
Combining \eqref{aa}, we deduce that
\begin{align}\label{aaa}
\|u\|_{L^1_T({B^{0}_{\infty,1}})}
  &=C(k\|\tau\|_{L^1_T({B^{0}_{\infty,1}})}+\|\Gamma\|_{L^1_T({B^{0}_{\infty,1}})})\notag\\
  &\leq C(k^3\epsilon_0
  +k^3\epsilon_0)\notag\\
  &\leq \frac{1}{2}k^2\epsilon_0 ,
\end{align}
where $\epsilon_0$ and $k$ satisfies \eqref{cyz1} and \eqref{globaljie1}. Using the bootstrap argument for \eqref{aa} and \eqref{aaa}, we obtain that
\begin{equation}\label{cyz11}
 \|\nabla u\|_{L^1_T({B^{0}_{\infty,1}})}\leq k^2\delta~~and~~ \|\tau\|_{L^{\infty}_T(B^{0}_{\infty,1})\cap L^{1}_T(B^{2}_{\infty,1})}\leq k{\delta},~~\forall t\in [0, T^*).
\end{equation}

Now, one can obtain the global existence of $(u,\tau)$ in $ C([0,\infty); {B}^{1+\frac{2}{p}}_{p,1})\times \Big(C([0,\infty);{B}^{\frac{2}{p}}_{p,1})\cap L^1\big([0,\infty);{B}^{\frac{2}{p}+2}_{p,1}\big)\Big)$ easily, since \eqref{cyz11} can be the blow-up criteria for \eqref{1}. Indeed, applying Lemma \ref{heat}--\ref{priori estimate} to \eqref{1}, we have
\begin{align}\label{see1}
\|u\|_{L^{\infty}_{t}({B^{1+\frac{2}{p}}_{p,1}})}\leq & \|u_0\|_{{B^{1+\frac{2}{p}}_{p,1}}}
+C\int_{0}^{t}\|u\|_{B^1_{\infty,1}}\|u\|_{B^{1+\frac{2}{p}}_{p,1}}
+k\|\tau\|_{B^{1+\frac{2}{p}}_{p,1}}ds,
\end{align}
and
\begin{align}\label{see2}
\|\tau\|_{L^{\infty}_t(B^{\frac{2}{p}}_{p,1})\cap L^{1}_t(B^{2+\frac{2}{p}}_{p,1})}\leq & \|\tau_0\|_{B^{\frac{2}{p}}_{p,1}}
+C\int_0^t\|u\|_{L^2}\|\tau\|_{B^{2+\frac{2}{p}}_{p,1}}
+\|\tau\|_{L^\infty}\|u\|_{B^{1+\frac{2}{p}}_{p,1}}+\|u\|_{B^{1+\frac{2}{p}}_{p,1}}dt
\end{align}
Combining \eqref{see1}-\eqref{see1} with Gronwall inequality, we obtain
\begin{align}\label{see2}
\|u\|_{L^{\infty}_{t}({B^{1+\frac{2}{p}}_{p,1}})}+\|\tau\|_{L^{\infty}_t(B^{\frac{2}{p}}_{p,1})\cap L^{1}_t(B^{2+\frac{2}{p}}_{p,1})}\leq & C(\|u_0\|_{{B^{1+\frac{2}{p}}_{p,1}}}+\|\tau_0\|_{B^{\frac{2}{p}}_{p,1}})e^{Ct}
\leq Ce^t,~~\forall t\in [0, T^*).
\end{align}
This implies $T^*=\infty$.

(4) Finally, to complete the proof of Theorem \ref{Th1}, we now prove the exponential decay. Rewrite \eqref{cyy21} :
\begin{equation}\label{cyy21again}
 \aligned
 \partial_t(e^{kt}\Gamma)+u\cdot\nabla(e^{kt}\Gamma)-k[\tilde{R},u\cdot\nabla](e^{kt}\tau)
 =w\nabla (e^{kt}u)-k\tilde{R}(Q(\nabla u,(e^{kt}\tau))).
 \endaligned
\end{equation}
Since $w=\Gamma+k\tilde{R}\tau$, applying \eqref{priori estimate} in Lemma \ref{heat} one can obtain
\begin{equation}\label{cyy22again}
 \aligned
   \|e^{kt}\Gamma(t)\|_{B^{\frac{3}{p}}_{p,1}}&\leq C(\|\Gamma_0\|_{B^{\frac{3}{p}}_{p,1}}
   +\int^t_0
\|\nabla u\|_{B^{0}_{\infty,1}}(\|e^{ks}\nabla u\|_{B^{\frac{3}{p}}_{p,1}}+\|e^{ks}\tau\|_{B^{\frac{3}{p}+2}_{p,1}})
ds)\\
   &\leq C(k^4\epsilon_0+
k^2\epsilon_0\|e^{kt}\tau\|_{L^{1}_t(B^{\frac{3}{p}+2}_{p,1})}
   +\int^t_0\|\nabla u\|_{B^{0}_{\infty,1}}\|e^{ks}\Gamma\|_{B^{\frac{3}{p}}_{p,1}}ds)
 \endaligned
\end{equation}
where $0<k\leq\frac{1}{C^2+1}\leq\frac{1}{16}$ and
$\|\nabla u\|_{L^1_T(B^{0}_{\infty,1})}\leq k^4\epsilon_0$.

Recall the equation of $\tau$ in \eqref{1}, one have
\begin{equation}\label{tauagain}
 \aligned
 \partial_t(e^{t}\tau)-\Delta(e^{t}\tau)+u\cdot\nabla(e^{t}\tau)
 +Q(\nabla u,(e^{kt}\tau))=e^{t}\mathbb{D}u.
 \endaligned
\end{equation}
That is
$$e^{kt}\tau=e^{t\Delta}e^{-(1-k)t}\tau_0
+\int_{0}^{t}e^{(t-s)\Delta}e^{-(1-k)(t-s)}(e^{ks}F),$$
where $F:=-u\cdot\nabla(\tau)
 -Q(\nabla u,(\tau))+\mathbb{D}u$. Applying \eqref{heatg3} with $k=\theta$ in Lemma \ref{heat}, we have
 \begin{equation}\label{tau1again}
 \aligned
   \|e^{kt}\tau(t)\|_{B^{\frac{3}{p}}_{p,1}}+\|e^{ks}\tau\|_{L^{1}_t(B^{\frac{3}{p}+2}_{p,1})}&\leq C(\|\tau_0\|_{B^{\frac{3}{p}}_{p,1}}
   +\int^t_0C
\|u\|_{B^{1}_{\infty,1}}\|e^{kt}\tau\|_{B^{\frac{3}{p}+2}_{\infty,1}}+
   \|e^{kt}\nabla u\|_{B^{\frac{3}{p}}_{p,1}}ds\\
   &\leq C(\|\tau_0\|_{B^{\frac{3}{p}}_{p,1}}
   +Ck\|e^{ks}\tau\|_{L^1_T(B^{\frac{3}{p}+2}_{p,1})} +\int_0^t\|e^{kt}\Gamma\|_{B^{\frac{3}{p}}_{p,1}}ds)\\
   &\leq C(k^4\epsilon_0+\int_0^t\|e^{kt}\Gamma\|_{B^{\frac{3}{p}}_{p,1}}ds),
 \endaligned
\end{equation}
Combining \eqref{tauagain} with $\frac{k}{4(C+1)}\times$\eqref{tau1again}, and applying Gronwall inequality, we obtain
 \begin{equation}
 \aligned
   e^{kt}(k\|\tau(t)\|_{B^{\frac{3}{p}}_{p,1}}+\|\Gamma\|_{B^{\frac{3}{p}}_{p,1}})
\leq C(\|\tau_0+\nabla u_0\|_{B^{\frac{3}{p}}_{p,1}})e^{\frac{k}{4}t}\leq Ck^3e^{\frac{k}{4}t},
 \endaligned
\end{equation}
where we use $\epsilon_0\leq\frac{1}{4(C^6+1)}$ and $k\leq \frac{1}{C^2+1}$ by \eqref{cyz1}. Since $w=\Gamma+k\tilde{R}\tau$, we obtain
$$k\|\tau(t)\|_{B^{\frac{3}{p}}_{p,1}}+\|w(t)\|_{B^{\frac{3}{p}}_{p,1}}
\leq C(\|\tau_0+\nabla u_0\|_{B^{\frac{3}{p}}_{p,1}})e^{-\frac{k}{2}t}.$$
This complete the proof of Theorem \ref{Th2}.
\end{proof}

{\bf The proof of Theorem \ref{th1}:}\\
 The proof is similar to \cite{135}, we give the proof briefly.
 \begin{proof}
 For $0<k\leq 10$, assume that for any $0\leq t<T<T^*$ we have
$$\|\nabla u,\nabla\tau\|^2_{L^{\infty}_t(H^{s-1})}+\| \nabla\tau\|^2_{L^2_t(H^{s})}\leq k^4\delta^2
~~and~~\| \nabla u\|^2_{L^2_t(H^{s-1})}\leq k^4\delta^{\frac{3}{2}},$$
where $\delta:=16(C^2+1)\epsilon_0$ and $\epsilon_0:=\frac{1}{4(C^6+1)}$ for a fixed large constant $C$. Set the initial data such that
$$\|\nabla u\|^2_{H^{s-1}}+\|\tau_0\|^2_{H^{s}}\leq k^6\epsilon_0^2,$$

Firstly, taking the $L^2$ and $\dot{H}^1$ inner product of \eqref{1}, we have
\begin{align}\label{1.2}
\frac{1}{2}\|\tau\|^2_{L^{\infty}_t(L^2)}+\|\tau\|^2_{L^2_t(H^1)}&\leq \|\tau_0\|^2_{L^2}+\int_{0}^{t}\|\nabla u\|_{L^{\infty}}\|\tau\|^2_{L^2}+\frac{1}{2}\|\nabla u\|^2_{L^2}+\frac{1}{2}\|\tau\|^2_{L^2}ds\notag\\
&\leq Ck^6(\epsilon_0+\delta^{\frac{5}{2}}),
\end{align}
\begin{align}\label{1.1}
\|u\|^2_{L^{\infty}_t(L^2)}+k\|\tau\|^2_{L^{\infty}_t(L^2)}+k\|\tau\|^2_{L^2_t(H^1)}&\leq \|u_0\|^2_{L^2}+k\|\tau_0\|^2_{L^2}+k\int_{0}^{t}\|\nabla u\|_{L^{\infty}}\|\tau\|^2_{L^2}ds\notag\\
&\leq (\|u_0\|_{L^2}+k\|\tau_0\|_{L^2}+k\delta\|\tau\|^2_{L^2_t(L^2)})\notag\\
&\leq Ck^2(\|u_0\|_{L^2}+k\|\tau_0\|_{L^2})\leq Ck^4\epsilon_0,
\end{align}

\begin{align}\label{1.3}
\int_{0}^{t}\|\nabla u\|^2_{L^2}ds &\leq C\int_{0}^{t}(\|\tau\|^2_{H^2}+\|u\|^2_{W^{1,\infty}}\|\tau\|^2_{H^1}+ \|\nabla\tau\|_{L^2}\|\nabla u\|^2_{H^{s-1}}+k\|\nabla \tau\|^2_{L^2})ds+\|\tau\|_{L^2}\|\nabla u\|_{L^2} \notag\\
&\leq Ck^4\delta^2
\end{align}
and
\begin{align}\label{1.4}
&\quad \|\nabla u\|^2_{L^{\infty}_t(L^2)}+k\|\nabla\tau\|^2_{L^{\infty}_t(L^2)}
+k\|\nabla\tau\|^2_{L^2_t(H^1)}\notag\\
&\leq \|\nabla u_0\|^2_{L^2}+k\|\nabla\tau_0\|^2_{L^2}+\int_{0}^{t}a\|\nabla u\|_{L^{\infty}}\|\nabla u\|^2_{L^2}+k\|\nabla u\|_{L^{\infty}}\|\tau\|^2_{H^2}ds\notag\\
&\leq Ck^6(\epsilon_0+\delta^{\frac{5}{2}}).
\end{align}

Then, taking the $\dot{H}^s$ inner product of \eqref{1}, we have
\begin{align}\label{1.5}
&\quad \| u\|^2_{L^{\infty}_t(\dot{H}^s)}+k\|\tau\|^2_{L^{\infty}_t(\dot{H}^s)}+k\|\tau\|^2_{L^2_t(\dot{H}^s\cap\dot{H}^{s+1})}\notag\\
&\leq \| u_0\|^2_{\dot{H}^s}+k\| \tau_0\|^2_{\dot{H}^s}+\int_{0}^{t}a\|\nabla u\|_{L^{\infty}}\|\nabla u\|^2_{\dot{H}^s}+k\|\nabla u\|_{H^{s-1}}\|\tau\|^2_{H^{s+1}}ds\notag\\
&\leq Ck^6(\epsilon^2_0+\delta^{\frac{5}{2}}).
\end{align}

\begin{align}\label{1.6}
\int_{0}^{t}\|\nabla u\|^2_{\dot{H}^{s-1}}ds &\leq C\int_{0}^{t}(\|\tau\|^2_{H^{s}}+\|u\|^2_{W^{1,\infty}}\|\nabla\tau\|^2_{H^s})+\|\nabla u\|^2_{H^{s-1}}\|\nabla\tau\|^2_{H^s})\notag\\
&~~+k\|\nabla \tau\|^2_{L^2}ds+ \|\tau\|_{\dot{H}^{s}}\|\nabla u\|_{\dot{H}^{s-1}} \notag\\
&\leq Ck^4\delta^2
\end{align}
Combining \eqref{1.3}-\eqref{1.6} with the bootstrap argument, we finally obtain for any $0\leq t<T^*$
$$\|\nabla u,\nabla\tau\|^2_{L^{\infty}_t(H^{s-1})}+\| \nabla\tau\|^2_{L^2_t(H^{s})}\leq \frac{1}{2}k^4\delta^2
~~and~~\| \nabla u\|^2_{L^2_t(H^{s-1})}\leq \frac{1}{2}k^4\delta^{\frac{3}{2}},$$

Finally, since the proof of \eqref{pdecay} can refer to \cite{135}, this complete the proof.
\end{proof}

\section{Global stability for $0<k\leq 10$}

In this section, we will give the prove of Theorem \ref{th3}.  Firstly, we give the global stability for \eqref{1} in a weaker space $H^{s-1}(\mathbb{R}^3)\times H^{s-1}(\mathbb{R}^3)$.
\begin{lemma}\label{low}
Let $(u_1,\tau_1) ,(u_2,\tau_2)$ be two global strong solutions of \eqref{1} in Theorem \ref{th1} with fixed $0<k\leq 10$, then for any $t>0$ we have
\begin{align}\label{3.0}
&\quad\|u^1-u^2,\tau^1-\tau^2\|^2_{L^{\infty}_t(H^{s-1})}+\|\tau^1-\tau^2\|^2_{L^2_t(H^{s})}
+\|\nabla (u^1-u^2)\|^2_{L^2_t(H^{s-2})}\notag\\
&\leq \frac{C}{k}\|u^1_0-u^2_0,\tau^1_0-\tau^2_0\|^2_{H^{s-1}}.
\end{align}

\end{lemma}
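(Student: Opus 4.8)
The plan is to run a weighted energy estimate at the level $H^{s-1}$ on the system satisfied by the differences, and then to manufacture the velocity dissipation $\|\nabla(u^1-u^2)\|_{L^2_t(H^{s-2})}$ through an auxiliary cross-term, since the momentum equation carries no viscosity. Writing $\delta u:=u^1-u^2$, $\delta\tau:=\tau^1-\tau^2$, $\delta p:=p^1-p^2$ and subtracting the two copies of \eqref{1}, bilinearity of $Q$ gives
\begin{align*}
\partial_t\delta u+(u^1\cdot\nabla)\delta u+(\delta u\cdot\nabla)u^2+\nabla\delta p&=k\,\mathrm{div}\,\delta\tau,\\
\partial_t\delta\tau+(u^1\cdot\nabla)\delta\tau+(\delta u\cdot\nabla)\tau^2-\Delta\delta\tau+\delta\tau+Q(\nabla u^1,\delta\tau)+Q(\nabla\delta u,\tau^2)&=\mathbb{D}\delta u,
\end{align*}
together with $\mathrm{div}\,\delta u=0$. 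I would test the first equation with $\Lambda^{2(s-1)}\delta u$ and the second with $k\,\Lambda^{2(s-1)}\delta\tau$ (where $\Lambda=(-\Delta)^{1/2}$), add the analogous $L^2$-level estimate to recover the full inhomogeneous norm, and observe that the pressure drops by $\mathrm{div}\,\delta u=0$, the transport operators $(u^1\cdot\nabla)$ are antisymmetric, and, crucially, the two coupling contributions cancel: since $\delta\tau$ is symmetric, $k\langle\mathrm{div}\,\delta\tau,\delta u\rangle=-k\langle\delta\tau,\mathbb{D}\delta u\rangle$ exactly balances the term $k\langle\mathbb{D}\delta u,\delta\tau\rangle$ coming from the stress equation. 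This produces an identity of the shape
$$\tfrac12\tfrac{d}{dt}\big(\|\delta u\|_{H^{s-1}}^2+k\|\delta\tau\|_{H^{s-1}}^2\big)+k\|\delta\tau\|_{H^{s-1}}^2+k\|\nabla\delta\tau\|_{H^{s-1}}^2=\mathcal R,$$
where $\mathcal R$ collects the transport commutators and the contributions of $(\delta u\cdot\nabla)u^2$, $(\delta u\cdot\nabla)\tau^2$, $Q(\nabla u^1,\delta\tau)$ and $Q(\nabla\delta u,\tau^2)$.

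Since $s-1>\tfrac32$, the space $H^{s-1}(\R^3)$ is an algebra, so I would control $\mathcal R$ by the product and commutator estimates (Corollary \ref{co1} and the commutator lemma cited there): up to integrable-in-time weights, $\mathcal R$ is dominated by $C\big(\|\nabla u^1\|_{H^{s-1}}+\|\nabla u^2\|_{H^{s-1}}+\|\tau^2\|_{H^s}\big)\|\delta u,\delta\tau\|_{H^{s-1}}^2$ together with terms of type $\|\tau^2\|_{H^s}\,\|\nabla\delta u\|_{H^{s-2}}\,\|\delta\tau\|_{H^s}$. The a priori bounds furnished by Theorem \ref{th1} guarantee that $\|\nabla u^i\|_{L^\infty_t(H^{s-1})}$ and $\|\tau^i\|_{L^\infty_t(H^s)}$ are small and that $\|\nabla u^i\|_{L^2_t(H^{s-1})}$, $\|\tau^i\|_{L^2_t(H^{s+1})}$ are finite; hence each coefficient is either small enough to be absorbed into the $k$-dissipation by Young's inequality or supplies an integrable weight $V'(t)$ for the subsequent Gronwall argument. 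This step alone yields control of $\|\delta u,\delta\tau\|_{L^\infty_t(H^{s-1})}$ and, from the parabolic term, of $\|\delta\tau\|_{L^2_t(H^s)}$.

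The main obstacle is that the estimate above never sees $\nabla\delta u$, because the momentum equation is inviscid. To recover it I would introduce the interaction functional $\mathcal I(t):=\langle\Lambda^{s-2}\delta\tau,\Lambda^{s-2}\mathbb{D}\delta u\rangle$ and differentiate in time. The pairing of $\mathbb{D}\delta u$ (the right-hand side of the stress equation) with itself gives $\|\mathbb{D}\delta u\|_{H^{s-2}}^2=\tfrac12\|\nabla\delta u\|_{H^{s-2}}^2$ with a favorable sign, where I use that for divergence-free fields $\|\mathbb{D}\delta u\|_{L^2}^2=\tfrac12\|\nabla\delta u\|_{L^2}^2$. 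The dangerous piece $\langle\Lambda^{s-2}\delta\tau,\Lambda^{s-2}\mathbb{D}\partial_t\delta u\rangle=-\langle\Lambda^{s-2}\mathrm{div}\,\delta\tau,\Lambda^{s-2}\partial_t\delta u\rangle$ is handled by substituting the momentum equation for $\partial_t\delta u$, which turns it into a harmless $O(k)\|\mathrm{div}\,\delta\tau\|_{H^{s-2}}^2$ plus lower-order transport terms already controlled above. Adding $\theta\,\mathcal I$ with a small weight $\theta\sim k$ to the energy of the previous step keeps the total functional equivalent to $\|\delta u\|_{H^{s-1}}^2+k\|\delta\tau\|_{H^{s-1}}^2$ (because $|\mathcal I|\le\tfrac14\|\nabla\delta u\|_{H^{s-2}}^2+C\|\delta\tau\|_{H^{s-2}}^2$), and supplies the missing $\|\nabla\delta u\|_{L^2_t(H^{s-2})}^2$ after time integration. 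The delicate part here is precisely this bookkeeping: the forced drop to $H^{s-2}$ and the correct tuning of $\theta$ so that the coercive term survives and the $k$-powers come out as stated.

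Finally, I would collect the three steps into a single differential inequality for the modified energy and close it by Gronwall with the integrable weight $V(t)$ built from the norms of the two reference solutions. This gives $\|\delta u,\delta\tau\|_{L^\infty_t(H^{s-1})}^2+k\|\delta\tau\|_{L^2_t(H^s)}^2+k\|\nabla\delta u\|_{L^2_t(H^{s-2})}^2\le C\big(\|\delta u_0\|_{H^{s-1}}^2+k\|\delta\tau_0\|_{H^{s-1}}^2\big)$. Dividing the dissipative quantities by their coefficient $k$ produces the factor $\tfrac1k$, and enlarging $C$ absorbs the remaining contributions and the weight $k\|\delta\tau_0\|_{H^{s-1}}^2$ (recall $k\le10$), which delivers \eqref{3.0}. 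I expect the inviscid recovery of $\nabla\delta u$ in Step three to be the crux of the argument, the rest being routine product estimates and an application of Gronwall's inequality.
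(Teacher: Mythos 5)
Your proposal is correct and follows essentially the same route as the paper's proof: the same difference system, the same $k$-weighted $H^{s-1}$ energy estimate exploiting the exact cancellation $k\langle\mathrm{div}\,\delta\tau,\delta u\rangle+k\langle\mathbb{D}\delta u,\delta\tau\rangle=0$, the same recovery of $\|\nabla(u^1-u^2)\|_{L^2_t(H^{s-2})}$ through the cross pairing of $\delta\tau$ with $\mathbb{D}\delta u$ (this is what produces the paper's inequalities \eqref{4.3} and \eqref{4.6}, including the boundary term $\|\tau_1-\tau_2\|_{L^2}\|u_1-u_2\|_{\dot H^1}$), and the same $O(k)$-weighted combination followed by Gronwall and division by $k$. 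Your explicit interaction functional $\mathcal I$ is simply a more transparent presentation of what the paper does implicitly.
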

\begin{proof}
Give the equation of $(u^1-u^2,\tau_1-\tau_2)$:
\begin{align}\label{3.19}
\left\{
\begin{array}{ll}
(u^1-u^2){t}+u^1\nabla (u^1-u^2)+(u^1-u^2)\nabla u^2+\nabla (P^1-P^2)=kdiv (\tau^1-\tau^2) ,\\[1ex]
(\tau^1-\tau^2)_{t}+(\tau^1-\tau^2)-\Delta (\tau^1-\tau^2)+u^1\nabla (\tau^1-\tau^1)+(u^1-u^2)\nabla\tau^1 \\[1ex]
+Q(\nabla (u^1-u^2),~\tau^1)+Q(\nabla u^1,~(\tau^1-\tau^2))=\mathbb{D}(u^1-u^2)\\[1ex]
\end{array}
\right.
\end{align}
By Theorem \ref{th1}, we have $$\|\nabla u_i,\nabla\tau_i\|^2_{L^{\infty}_t(H^{s-1})}+\| \nabla\tau_i\|^2_{L^2_t(H^{s})}\leq C\delta^2
~~and~~\| \nabla u_i\|^2_{L^2_t(H^{s-1})}\leq C\delta^{\frac{3}{2}},~~i=1,2,~~\forall t>0,$$
where $\delta:=\frac{\epsilon_0}{16(C^2+1)}$ and $\epsilon_0:=\frac{1}{4(C^6+1)}$.

Similarly to \eqref{1.1},\eqref{1.3},\eqref{1.5} and \eqref{1.6} in the proof of Theorem \ref{th1}, we also use the energy method and have:
\begin{align}\label{4.1}
&\quad \|u^1-u^2\|^2_{L^{\infty}_t(L^2)}+k\|\tau^1-\tau^2\|^2_{L^{\infty}_t(L^2)}+k\|\tau^1-\tau^2\|^2_{L^2_t(H^1)}\notag\\
&\leq \| u^1_0-u^2_0\|_{\dot{H}^s}+Ck\|\tau^1_0-\tau^2_0\|_{\dot{H}^s}\notag\\
&~~+\int_{0}^{t}\|u^2\|_{L^6}\|\nabla(u_1-u_2)\|_{L^2}\|u_1-u_2\|_{L^3}
+k(\ |\tau_2\|_{H^{s}}\|u_1-u_2\|_{L^{2}}\|\tau_1-\tau_2\|_{H^{1}}
+\|u_2\|_{H^{s}}\|\tau_1-\tau_2\|^2_{H^{1}})ds\notag\\
&\leq \|u^1_0-u^2_0\|^2_{L^2}+k\|\tau^1_0-\tau^2_0\|^2_{L^2}\notag\\
&~+\int_{0}^{t}
\|u^2\|^{\frac{2}{3}}_{L^6}\|u^1-u^2\|^2_{\dot{H}^{1}}
+\|u^2\|^2_{L^6}\|u^1-u^2\|^2_{L^2}ds
+k\delta(\|\tau^1-\tau^2\|^2_{L^{2}_t(H^1)}
+\|u^1-u^2\|_{L^{\infty}_t(L^2)})ds\notag\\
&\leq C(\|u^1_0-u^2_0\|^2_{L^2}+\|\tau^1_0-\tau^2_0\|^2_{L^2}
+\delta^{\frac{2}{3}}\|u^1-u^2\|^2_{L^{2}_t(\dot{H}^{1})}),
\end{align}
\begin{align}\label{4.3}
\int_{0}^{t}\|\nabla (u^1-u^2)\|^2_{L^2}ds &\leq C\int_{0}^{t}(\|\tau^1-\tau^2\|^2_{H^s}
+\|u_1-u_2\|^2_{H^{s-1}})(\|\tau_2,\tau_1\|_{H^s}+\|\nabla u_2,\nabla u_1\|_{H^{s-1}})\notag\\
&~~+\|\tau_1-\tau_2\|_{{H}^2}ds+C\|\tau_1-\tau_2\|_{L^2}\|u_1-u_2\|_{\dot{H}^1}\notag\\
&\leq C[\delta(\|\tau^1-\tau^2\|^2_{L^{2}_t(H^s)}
+\|u_1-u_2\|^2_{L^{2}_t(H^{s-1})})
+\|\tau_1-\tau_2\|^2_{L^{\infty}_t(L^2)}
+\|u_1-u_2\|^2_{L^{\infty}_t(\dot{H}^1)}].
\end{align}
\begin{align}\label{4.5}
&\quad \| u_1-u_2\|^2_{L^{\infty}_t(\dot{H}^{s-1})}
+k\|\tau^1-\tau^2\|^2_{L^{\infty}_t(\dot{H}^{s-1})}
+k\|\tau\|^2_{L^2_t(\dot{H}^{s-1}\cap\dot{H}^{s})}\notag\\
&\leq C[\| u^1_0-u^2_0\|_{\dot{H}^s}+\|\tau^1_0-\tau^2_0\|_{\dot{H}^s}
+\delta^{\frac{2}{3}}\|\nabla(u_1-u_2)\|^2_{L^2_t(H^{s-2})}].
\end{align}
and
\begin{align}\label{4.6}
\int_{0}^{t}\|\nabla (u^1-u^2)\|^2_{\dot{H}^{s-2}}ds \leq C(\|\tau_1-\tau_2\|^2_{L^{\infty}_t(H^{s-1})\cap L^2_t(H^{s})}
+\|u^1-u^2\|^2_{L^{\infty}_t(H^{s-1})})
\end{align}

Then, let
[\eqref{4.1}+\eqref{4.5}]+$\frac{k}{16(C+1)(k+1)}$[\eqref{4.3}+\eqref{4.6}], since $0<k\leq 10$, we deduce that
\begin{align}\label{3.7}
\quad&\| u^1-u^2\|^2_{L^{\infty}_t({H}^{s-1})}+
k\|\tau^1-\tau^2\|^2_{L^{\infty}_t({H}^{s-1})}
+k\|\tau^1-\tau^2\|^2_{L^2_t({H}^{s})}+\|\nabla (u^1-u^2)\|^2_{L^2_t(H^{s-2})}\notag\\
&\leq C(\|\tau^1_0-\tau^2_0\|^2_{H^{s-1}}
+\|u^1_0-u^2_0\|^2_{H^{s-1}}).
\end{align}
This implies \eqref{3.0}.
\end{proof}

Secondly, we give the global stability in the original space
$H^{s-1}(\mathbb{R}^3)\times H^{s-1}(\mathbb{R}^3)$.
\begin{theorem}\label{th5}
 Let $(u_0,\tau_0)\in H^{s} (\mathbb{R}^3)\times H^{s} (\mathbb{R}^3)$ with $s>\frac{5}{2}$. Assume that $(u_0,\tau_0)$ satisfies the conditions in Theorem \ref{th1} such that
 $$\|\nabla u_0\|_{H^{s-1}}+\|\tau_0\|_{H^{s}}\leq k^6\epsilon_0,~~for~fixed~k\in(0,10]. $$
 If there exists a sequence $(u^n_0,\tau^n_0)\in (H^{s} (\mathbb{R}^3),H^{s} (\mathbb{R}^3))$ such that
 $$\lim_{n\rightarrow \infty}\|u^n_0-u_0,\tau^n_0-\tau_0\|_{H^s}=0,$$
then for any $t>0$ we have
\begin{align}\label{wending}
 \lim_{n\rightarrow \infty}\|u^n-u\|_{L^{\infty}_t([0,\infty);H^{s})}+ \|u^n-u\|_{L^{\infty}_t([0,\infty);H^{s})}=0.
 \end{align}
\end{theorem}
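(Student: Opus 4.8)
The plan is to combine the low-regularity stability already recorded in Lemma \ref{low} with a top-order energy estimate, the only genuine difficulty being a one-derivative loss in the velocity-stretching term, which I would resolve by a Bona--Smith type regularization. First I would fix the uniform bounds. Since $(u^n_0,\tau^n_0)\to(u_0,\tau_0)$ in $H^s$ and $(u_0,\tau_0)$ satisfies the \emph{strict} smallness $\|\nabla u_0\|_{H^{s-1}}+\|\tau_0\|_{H^s}\le k^6\epsilon_0$, for all large $n$ the data $(u^n_0,\tau^n_0)$ also satisfy \eqref{smallness}, so Theorem \ref{th1} applies to each $(u^n,\tau^n)$ and to $(u,\tau)$, giving
\begin{equation*}
\|\nabla u^n,\nabla\tau^n\|_{L^\infty_t(H^{s-1})}^2+\|\nabla\tau^n\|_{L^2_t(H^s)}^2\lesssim\delta^2,\qquad \|\nabla u^n\|_{L^2_t(H^{s-1})}^2\lesssim\delta^{3/2},
\end{equation*}
uniformly in $n$ and $t$. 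Applying Lemma \ref{low} with $(u^1,\tau^1)=(u^n,\tau^n)$ and $(u^2,\tau^2)=(u,\tau)$ then gives $\|u^n-u,\tau^n-\tau\|_{L^\infty_t(H^{s-1})}^2\le\frac{C}{k}\|u^n_0-u_0,\tau^n_0-\tau_0\|_{H^{s-1}}^2\to0$. Interpolating this with the uniform $H^s$ bound yields convergence in every $H^{s'}$ with $s'<s$, so only the endpoint $s'=s$ requires work.

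Writing $f:=u^n-u$ and $g:=\tau^n-\tau$, I would differentiate $\|f\|_{\dot H^s}^2+k\|g\|_{\dot H^s}^2$ using the difference system \eqref{3.19}. The coupling terms $k\int\Lambda^s\mathrm{div}\,g\cdot\Lambda^s f$ and $k\int\Lambda^s\mathbb{D}f\cdot\Lambda^s g$ cancel at the highest order, exactly as in the proof of Theorem \ref{th1} (because $\tau$ is symmetric, $\int\mathrm{div}\,g\cdot f=-\int g:\mathbb{D}f$). The remaining contributions of the $g$-equation — the transport term, the bilinear $Q$-terms, and the forcing $\mathbb{D}f$ — are absorbed by the parabolic dissipation $k\|g\|_{L^2_t(\dot H^{s+1})}$ together with the smallness $\delta$ and the embedding $H^{s-1}\hookrightarrow L^\infty$ valid for $s>\tfrac52$. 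On the $f$-equation, the transport $u^n\cdot\nabla f$ produces (after using $\mathrm{div}\,u^n=0$) only the commutator $[\Lambda^s,u^n\cdot\nabla]f$, which by the $H^s$ analogue of Corollary \ref{co1} is bounded by $\|\nabla u^n\|_{L^\infty}\|f\|_{\dot H^s}+\|u^n\|_{\dot H^s}\|\nabla f\|_{L^\infty}\lesssim\delta\|f\|_{H^s}$; all these terms close by Gronwall.

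The one term that does \emph{not} close this way is the velocity-stretching term $f\cdot\nabla u=\mathrm{div}(f\otimes u)$, whose top-order paraproduct piece $T_f\nabla u$ is controlled only by $\|f\|_{L^\infty}\|u\|_{\dot H^{s+1}}$. Since the velocity equation carries no dissipation, $\|u\|_{\dot H^{s+1}}$ is unavailable, and this is the main obstacle. I would overcome it by a Bona--Smith regularization: let $J_\varepsilon$ be the spectral cut-off $\widehat{J_\varepsilon v}=\chi(\varepsilon\cdot)\hat v$, and let $(u^{n,\varepsilon},\tau^{n,\varepsilon})$ and $(u^\varepsilon,\tau^\varepsilon)$ be the solutions issued from the mollified data. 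Using the standard bounds $\|J_\varepsilon v\|_{H^{s+1}}\lesssim\varepsilon^{-1}\|v\|_{H^s}$ and $\|v-J_\varepsilon v\|_{H^{s-1}}\lesssim\varepsilon\|v\|_{H^s}$, the splitting
\begin{equation*}
\|u^n-u\|_{\dot H^s}\le\|u^n-u^{n,\varepsilon}\|_{\dot H^s}+\|u^{n,\varepsilon}-u^\varepsilon\|_{\dot H^s}+\|u^\varepsilon-u\|_{\dot H^s}
\end{equation*}
separates the issue: the outer two terms compare a solution with the one from its \emph{own} mollified data, and tend to $0$ as $\varepsilon\to0$ uniformly in $n$ (here the bad factor $\varepsilon^{-1}$ from the regularized solution is compensated by the $O(\varepsilon)$ smallness of the data difference in $H^{s-1}$ coming from Lemma \ref{low}), while for fixed $\varepsilon$ the middle term involves only smooth solutions and tends to $0$ as $n\to\infty$.

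Finally I would send $n\to\infty$ for fixed $\varepsilon$, killing the middle term, and then $\varepsilon\to0$, killing the outer two; a Gronwall argument on the top-order energy then gives $\|u^n-u\|_{L^\infty_t(\dot H^s)}+\sqrt{k}\,\|\tau^n-\tau\|_{L^\infty_t(\dot H^s)\cap L^2_t(\dot H^{s+1})}\to0$, which combined with the $L^2$ and $\dot H^{s-1}$ convergence of Step~1 yields \eqref{wending}. The heart of the matter, and the step I expect to be most delicate, is precisely the derivative loss in $f\cdot\nabla u$ caused by the absence of velocity dissipation; everything else is a routine energy estimate powered by the $\tau$-dissipation, the coupling cancellation, and the smallness $\delta$.
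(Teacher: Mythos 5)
Your proposal is correct and follows essentially the same route as the paper: the paper also first gets $H^{s-1}$ convergence from Lemma \ref{low}, then runs a Bona--Smith argument with the Littlewood--Paley truncations $S_j$ in place of your $J_\varepsilon$, using the same three-term splitting and the same compensation of the $2^{j}$ (i.e.\ $\varepsilon^{-1}$) growth of the regularized solutions' higher norms against the $2^{-j}$ (i.e.\ $O(\varepsilon)$) smallness of the data difference in $H^{s-1}$ to handle the derivative-losing stretching term. The term you single out as the main obstacle is exactly the one the paper isolates in its estimate \eqref{3.13}, so no further comparison is needed.
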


\begin{proof}
Since the smallness of $(u_0,\tau_0)$ and $\lim_{n\rightarrow \infty}\|u^n_0-u_0,\tau^n_0-\tau_0\|_{H^s}=0$, let
$(u^n_j,\tau^n_j)$ be the solutions of \eqref{1} with the initial data $(S_ju^n_0,S_j\tau^n_j)~(n\in \mathbb{N}\cup\infty)$, then by Theorem \ref{th1}~(also use the bootstrap argument), $(u^n_j,\tau^n_j)$ are global solutions. Moreover, one can deduce that
\begin{align}\label{wen1}
\|\nabla u^n_j,k\nabla\tau^n_j\|^2_{L^{\infty}_t(H^{s-1})}+k\| \nabla\tau^n_j\|^2_{L^2_t(H^{s})}\leq C\delta^2~~and~~\| \nabla u^n_j\|^2_{L^2_t(H^{s-1})}\leq C\delta^{\frac{3}{2}},
\end{align}
Since \eqref{wen1} is the blow-up criterion of \eqref{1}, we easily obtain
\begin{align}\label{3.9}
\|\nabla u^n_j,k\nabla\tau^n_j\|^2_{L^{\infty}_t(H^{s})}+k\| \nabla\tau^n_j\|^2_{L^2_t(H^{s+1})}+\| \nabla u^n_j\|^2_{L^2_t(H^{s})}&\leq C\| S_ju_0,S_j\tau_0\|^2_{L^{\infty}_t(H^{s+1})}\notag\\
&\leq C(2^j\|u_0,\tau_0\|_{L^{\infty}_t(H^{s})})^2.
\end{align}

Now, by Lemma \ref{low}, for fixed $0<k\leq 10$ we already have
\begin{align}\label{bu3.8}
\|u^n-u, \tau^n-\tau\|_{L^{\infty}_t([0,\infty);H^{s-1})}\leq
 \frac{C}{k}\|u^n_0-u_0, \tau^n_0-\tau_0\|_{L^{\infty}_t([0,\infty);H^{s-1})}\rightarrow 0.
\end{align}
 In order to verify \eqref{wending}, we should only prove the high frequency estimation $\| u^n-u,\tau^n-\tau\|_{L^{\infty}_t(H^s)}\rightarrow 0$.
 Our main idea is to estimate:
\begin{align}\label{3.8}
\| u^n-u,\tau^n-\tau\|^2_{L^{\infty}_t(\dot{H}^{s})}
&\leq \| u^n-u^n_j,\tau^n-\tau^n_j\|^2_{L^{\infty}_t(\dot{H}^{s})}
+\| u^n_j-u^{\infty}_j,\tau^n_j-\tau^{\infty}_j\|^2_{L^{\infty}_t(\dot{H}^{s})}\notag\\
&~~+\| u^{\infty}_j-u^{\infty},\tau^{\infty}_j-\tau^{\infty}\|^2_{L^{\infty}_t(\dot{H}^{s})},
\end{align}
where $u^{\infty}:=u,~\tau^{\infty}:=\tau$. The proof will be divided into three parts.

\textbf{(1) estimate $\| u^n_j-u^{\infty}_j,\tau^n_j-\tau^{\infty}_j\|^2_{L^{\infty}_t(\dot{H}^{s})}$ for fixed j}\\
Firstly, we give the equation of $(u^n_j-u^{\infty}_j,\tau^n_j-\tau^{\infty}_j)$:
\begin{align}\label{3.19}
\left\{
\begin{array}{ll}
(u^n_j-u^{\infty}_j){t}+u^n_j\nabla (u^n_j-u^{\infty}_j)+(u^n_j-u^{\infty}_j)\nabla u^{\infty}_j+\nabla (P^n_j-P^{\infty}_j)=kdiv (\tau^n_j-\tau^{\infty}_j) ,\\[1ex]
(\tau^n_j-\tau^{\infty}_j)_{t}+(\tau^n_j-\tau^{\infty}_j)-\Delta (\tau^n_j-\tau^{\infty}_j)+u^n_j\nabla (\tau^n_j-\tau^{\infty}_j)+(u^n_j-u^{\infty}_j)\nabla\tau^{\infty}_j \\[1ex]
+Q(\nabla (u^n_j-u_j),~\tau^{\infty}_j)+Q(\nabla u^n_j,~(\tau^n_j-\tau^{\infty}_j))=\mathbb{D}(u^n_j-u^{\infty}_j)\\[1ex]
\end{array}
\right.
\end{align}
By Lemma \ref{low} we easily get
\begin{align}\label{bu1}
&\quad\|u^n_j-u^{\infty}_j,\tau^n-\tau^{\infty}_j\|^2_{L^{\infty}_t(H^{s-1})}
+\|\tau^n_j-\tau^{\infty}_j\|^2_{L^2_t(H^s)}
+\|\nabla (u^n_j-u^{\infty}_j)\|^2_{L^2_t(H^{s-2})}\notag\\
&\leq \frac{C}{k}\|S_ju^n_0-S_ju^{\infty}_0,S_j\tau^n_0-S_j\tau^{\infty}_0\|^2_{H^{s-1}}\notag\\
&\leq \frac{C}{k}\|u^n_0-u^{\infty}_0,\tau^n_0-\tau^{\infty}_0\|^2_{H^{s-1}}
\rightarrow 0,~~n\rightarrow\infty.
\end{align}
Then, taking the $\dot{H}^{s}$ inner product of \eqref{1}(similar to \eqref{4.5} and \eqref{4.6}), by \eqref{3.9} we have
\begin{align}\label{3.10}
&\quad \| u^n_j-u^{\infty}_j\|^2_{L^{\infty}_t(\dot{H}^{s})}
+k\|\tau^n_j-\tau^{\infty}_j\|^2_{L^{\infty}_t(\dot{H}^{s})}
+k\|\tau^n_j-\tau^{\infty}_j\|^2_{L^2_t(\dot{H}^{s}\cap\dot{H}^{s+1})}\notag\\
&\leq C[\| u^n_0-u^{\infty}_0\|_{\dot{H}^s}+\|\tau^n_0-\tau^{\infty}_0\|_{\dot{H}^s}
+\delta^{\frac{2}{3}}(\|\nabla(u^n_j-u^{\infty}_j)\|^2_{L^2_t(H^{s-1})}
+2^j\|u^n_j-u^{\infty}_j\|^2_{L^{2}_t(L^{\infty})})].
\end{align}
and
\begin{align}\label{3.11}
\int_{0}^{t}\|\nabla (u^n_j-u^{\infty}_j)\|^2_{\dot{H}^{s-1}}ds
\leq C(\|\tau^n_j-\tau^{\infty}_j\|^2_{L^{\infty}_t(H^{s})\cap L^2_t(H^{s+1})}
+\|u^n_j-u^{\infty}_j\|^2_{L^{\infty}_t(H^{s})})
\end{align}
Combing \eqref{3.10} with $\frac{k}{16(C+1)(k+1)}$\eqref{3.11}, we finally obtain
\begin{align}\label{3.12}
\| u^n_j-u^{\infty}_j\|^2_{L^{\infty}_t({H}^{s})}+k\| \tau^n_j-\tau^{\infty}_j\|^2_{L^{\infty}_t({H}^{s})}
&\leq C(2^j+1)(\| u^n_0-u^{\infty}_0\|_{{H}^s}+\|\tau^n_0-\tau^{\infty}_0\|_{{H}^s}
+\| u^n_j-u^{\infty}_j\|^2_{L^{\infty}_t(\dot{H}^{s})}\notag\\
&~~
+k\|\tau^n_j-\tau^{\infty}_j\|^2_{L^{\infty}_t(\dot{H}^{s})}
+k\|\tau^n_j-\tau^{\infty}_j\|^2_{L^2_t(\dot{H}^{s}\cap\dot{H}^{s+1})})\notag\\
&\rightarrow 0,~~for~ fixed~ j .
\end{align}
\textbf{(2) estimate $\| u^n-u^n_j,\tau^n-\tau^n_j\|^2_{L^{\infty}_t(\dot{H}^{s})}
$ for any $n\in\mathbb{N}\cup\infty$}\\
We give the equation of $(u^n_j-u^n,\tau^n_j-\tau^n)$
\begin{align}\label{3.19}
\left\{
\begin{array}{ll}
(u^n_j-u^n){t}+u^n_j\nabla (u^n_j-u^n)+(u^n_j-u^n)\nabla u_n+\nabla (P^n_j-P_n)=kdiv (\tau^n_j-\tau^n),\\[1ex]
(\tau^n_j-\tau^n)_{t}+(\tau^n_j-\tau^n)-\Delta (\tau^n_j-\tau^n)+u^n_j\nabla (\tau^n_j-\tau^n)+(u^n_j-u^n)\nabla\tau^n\\[1ex]
+Q(\nabla (u^n_j-u^n),~\tau^n)+Q(\nabla u^n_j,~(\tau^n_j-\tau^n))=\mathbb{D}(u^n_j-u^n)\\[1ex]
\end{array}
\right.
\end{align}
The operators are similar to Lemma \ref{low}. The only difference is the high order term:
\begin{align}\label{3.13}
&\quad\int_0^t<\nabla^{s}[(u^n-u^n_j)\nabla u^n_j],\nabla^{s}(u^n-u^n_j)>ds\notag\\
&\leq C\int_0^t\|\nabla u^n_j\|_{L^{\infty}}\|u^n-u^n_j\|^2_{\dot{H}^{s}}
+C\|\nabla u^n_j\|_{{\dot{H}^{s}}}\|u^n-u^n_j\|_{L^{\infty}}\|u^n-u^n_j\|_{\dot{H}^{s}}ds\notag\\
&\leq C\int_0^t\delta\|u^n-u^n_j\|^2_{\dot{H}^{s}}
+\frac{1}{\delta}\|\nabla u^n_j\|^2_{{\dot{H}^{s}}}\|u^n-u^n_j\|^2_{L^{\infty}}ds\notag\\
&\leq C\int_0^t\delta\|u^n-u^n_j\|^2_{\dot{H}^{s}}
+\frac{1}{\delta}\|\nabla u^n_j\|^2_{{\dot{H}^{s}}}\|\nabla (u^n-u^n_j)\|^2_{{H}^{s-2}}ds\notag\\
&\leq C\delta\|u^n-u^n_j\|^2_{L^{\infty}(\dot{H}^{s})}
+\frac{1}{\delta}(2^{j}\delta)^2\|u^n_0-S_ju^n_0\|^2_{H^{s-1}}\notag\\
&\leq C\delta\|u^n-u^n_j\|_{\dot{H}^{s}}
+\delta\|u^n_0-S_ju^n_0\|^2_{H^{s}}.
\end{align}
where the fourth inequality holds by Lemma \ref{low} and \eqref{3.9}, and we use the fact that $\|u^n_0-S_ju^n_0\|^2_{H^{s-1}}\leq C2^{-j}\|u^n_0-S_ju^n_0\|^2_{H^{s}}.$

Then, similar to \eqref{4.5} and \eqref{4.6} in Lemma \ref{low}, we have
\begin{align}\label{3.14}
&\quad \| u^n-u^{n}_j\|^2_{L^{\infty}_t(\dot{H}^{s})}
+k\|\tau^n-\tau^{n}_j\|^2_{L^{\infty}_t(\dot{H}^{s})}
+k\|\tau^n-\tau^{n}_j\|^2_{L^2_t(\dot{H}^{s}\cap\dot{H}^{s+1})}\notag\\
&\leq C(\| u^n_0-S_ju^{n}_0\|_{\dot{H}^s}+\|\tau^n_0-S_j\tau^{n}_0\|_{\dot{H}^s}
+\delta^{\frac{3}{4}}(\|\nabla(u^n-u^{n}_j)\|^2_{L^2_t(H^{s-1})}
+\|u^n_0-S_ju^n_0\|^2_{H^{s}})).
\end{align}
and
\begin{align}\label{3.15}
\int_{0}^{t}\|\nabla (u^n-u^{n}_j)\|^2_{\dot{H}^{s-1}}ds
&\leq C(\|\tau^n-\tau^{n}_j\|^2_{L^{\infty}_t(H^{s})\cap L^2_t(H^{s+1})}
+\|u^n-u^{n}_j\|^2_{L^{\infty}_t(H^{s})})
\end{align}
Combing\eqref{3.14} with $\frac{k}{16(C+1)(k+1)}\times$\eqref{3.15}, we obtain
\begin{align}\label{3.16}
&\quad \| u^n-u^{n}_j\|^2_{L^{\infty}_t(\dot{H}^{s})}+k\|\tau^n-\tau^{n}_j\|^2_{L^{\infty}_t(\dot{H}^{s})}
+k\|\tau^n-\tau^{n}_j\|^2_{L^2_t(\dot{H}^{s}\cap\dot{H}^{s+1})}
+\int_{0}^{t}\|\nabla (u^n-u^{n}_j)\|^2_{\dot{H}^{s-1}}ds\notag\\
&\leq C(\| u^n_0-S_ju^{n}_0,\tau^n_0-S_j\tau^{n}_0\|_{{H}^s}+\delta\|\nabla(u^n-u^{n}_j)\|^2_{L^2_t(H^{s-1})}
)\notag\\
&\rightarrow 0,~~j\rightarrow\infty,~~\forall n\in\mathbb{N}^+\cup{\infty}.
\end{align}

\textbf{(3) Complete the proof}\\
Combing \eqref{3.16},\eqref{3.12} with \eqref{3.8}, one obtain that
$$\| u^n-u^{\infty},\tau^n-\tau^{\infty}\|_{L^{\infty}_t(\dot{H}^{s})}
\rightarrow 0,~~n\rightarrow 0.$$
In fact, for any $\epsilon>0$, by \eqref{3.16}, there exists a $M(\epsilon)$ such that, when $j\geq M$, we have
$$\| u^n-u^{n}_j,\tau^n-\tau^{n}_j\|_{L^{\infty}_t({H}^{s})}\leq \frac{\epsilon}{3},~~\forall n\in\mathbb{N}^+\cup\infty.$$
Then, for this $j$, by \eqref{3.12}, there exists a $\bar{M}(j,\epsilon)$ such that, when $n\geq \bar{M}$, we have
$$\| u^n_j-u^{\infty}_j,\tau^n_j-\tau^{\infty}_j\|_{L^{\infty}_t({H}^{s})}\leq \frac{\epsilon}{3}.$$
where $\bar{M}$ is dependent on $j,\epsilon$, since $j$ is dependent on $M(\epsilon)$, this implies that $\bar{M}$ is dependent on $\epsilon$.
Finally, we have
\begin{align}\label{3.17}
\| u^n-u,\tau^n-\tau\|^2_{L^{\infty}_t(\dot{H}^{s})}
&\leq \| u^n-u^n_j,\tau^n-\tau^n_j\|^2_{L^{\infty}_t(\dot{H}^{s})}
+\| u^n_j-u^{\infty}_j,\tau^n_j-\tau^{\infty}_j\|^2_{L^{\infty}_t(\dot{H}^{s})}\notag\\
&~~+\| u^{\infty}_j-u^{\infty},\tau^{\infty}_j-\tau^{\infty}\|^2_{L^{\infty}_t(\dot{H}^{s})}\notag\\
&\leq \frac{\epsilon}{3}+\frac{\epsilon}{3}+\frac{\epsilon}{3}=\epsilon.
\end{align}
Combining with \eqref{bu3.8}, that is
$$\| u^n-u^{\infty},\tau^n-\tau^{\infty}\|_{L^{\infty}_t(\dot{H}^{s})}
\rightarrow 0,~~n\rightarrow 0,$$
which completes the proof.
\end{proof}

Thanks to the globally steady result in Theorem \ref{th5}, now we can prove Theorem \ref{th3} easily.

\textbf{Proof of Theorem \ref{th3}:}
\begin{proof}
To prove
$$\lim_{m\rightarrow \infty}\|u^m-u,~\tau^m-\tau\|_{L^{\infty}([0,\infty);H^{s})}=0$$
for $k^m\rightarrow k,~m\rightarrow \infty.$
Our main idea is to estimate
\begin{align}\label{3.18}
\| u^m-u,\tau^m-\tau\|^2_{L^{\infty}_t({H}^{s})}
\leq \| u^m-u^m_j,\tau^m-\tau^m_j\|^2_{L^{\infty}_t({H}^{s})}
+\| u^m_j-u_j,\tau^m_j-\tau_j\|^2_{L^{\infty}_t({H}^{s})}
+\| u_j-u,\tau_j-\tau\|^2_{L^{\infty}_t({H}^{s})},
\end{align}
where $(u^m,\tau^m)$ are the solutions of \eqref{1} with the coefficient $k^m$ and the same initial data $(u_0,\tau_0)$;~ $(u^m_j,\tau^m_j)$ are the solutions of \eqref{1} with the coefficient $k^m$ and the same initial data $(S_ju_0,S_j\tau)~(m\in \mathbb{N}\cup\infty,~k^{\infty}=k,~u^{\infty}_{j}:=u_{j},~\tau^{\infty}_{j}:=\tau_{j})$.

Firstly, we estimate the term $\| u^m_j-u^{\infty}_j,\tau^m_j-\tau^{\infty}_j\|^2_{L^{\infty}_t(\dot{H}^{s})}
$ with fix $j$.\\ We have
\begin{align}\label{3.19}
\left\{
\begin{array}{ll}
(u^m_j-u_j)_{t}+u^m_j\nabla (u^m_j-u_j)+(u^m_j-u_j)\nabla u_j+\nabla (P^m_j-P_j)=kdiv (\tau^m_j-\tau_j)+(k^m-k)div\tau^m_j ,\\[1ex]
(\tau^m_j-\tau_j)_{t}+(\tau^m_j-\tau_j)-\Delta (\tau^m_j-\tau_j)+u^m_j\nabla (\tau^m_j-\tau_j)+(u^m_j-u_j)\nabla\tau_j\\[1ex]
+Q(\nabla (u^m_j-u_j),~\tau_j)+Q(\nabla u^m_j,~(\tau^m_j-\tau_j))=\mathbb{D}(u^m_j-u_j)\\[1ex]
\end{array}
\right.
\end{align}

Similar to the proof of Theorem \ref{th5}, by the energy estimations we have
\begin{align}\label{3.20}
\| u^m_j-u^{\infty}_j,\tau^m_j-\tau^{\infty}_j\|^2_{L^{\infty}_t({H}^{s})}
&\leq C(2^j+1)(\| u^m_0-u^{\infty}_0\|_{{H}^s}+\|\tau^m_0-\tau^{\infty}_0\|_{{H}^s}
+(k^m-k)\|div\tau^n_j\|^2_{L^{2}_t{H}^s})\notag\\
&\leq C(2^j+1)(\| u^m_0-u^{\infty}_0\|_{{H}^s}+\|\tau^m_0-\tau^{\infty}_0\|_{{H}^s}
+(k^m-k))\notag\\
&\rightarrow 0,~~m\rightarrow \infty,~~for~ fixed~ j .
\end{align}

Then, by Theorem \ref{th5}, we see that system \eqref{1} is globally steady for small initial data. Since $\| u_0-S_ju_0,\tau_0-S_j\tau_0\|_{\dot{H}^{s}}\rightarrow 0,~j\rightarrow \infty$, so we have
\begin{align}\label{3.21}
\| u^m-u^m_j,\tau^m-\tau^m_j\|^2_{L^{\infty}_t(\dot{H}^{s})}
\rightarrow 0,~~j\rightarrow \infty,~~for~any~m\in\mathbb{N}\cap\infty.
\end{align}

Finally, combining \eqref{3.20}, \eqref{3.21} with \eqref{3.18}, we deduce that
$$\lim_{n\rightarrow \infty}\|u^m-u\|_{L^{\infty}([0,\infty);H^{s})}+ \|u^m-u\|_{L^{\infty}([0,\infty);H^{s})}=0.$$
\end{proof}
\section{Instability for $k\rightarrow 0$.}

Indeed, by \eqref{3.0} in Lemma \ref{low}, one can see that $\|u^1-u^2,\tau^1-\tau^2\|^2_{L^{\infty}_t(H^{s-1})}$ can not be controlled by their initial data as $k\rightarrow 0$. In this section, we will prove that the system \eqref{1} is really unsteady as $k\rightarrow 0$ by showing that the $L^2$ norm of $u^k(t,x)$ will have a jump for large time.
\textbf{Proof of Theorem \ref{th4}:}\quad Let  $\epsilon_0=\frac{1}{64(C^4+1)} $ be the fixed small constant in Theorem \ref{Th2} and Theorem \ref{th1}. Recall the system:
\begin{equation}\label{th4-1}
\begin{cases}
\partial_tu+(u\cdot\nabla)u+\nabla p=k\mathrm{div}(\tau),\\
\partial_t\tau+(u\cdot\nabla)\tau-\Delta\tau+\tau+\mathrm{Q}(\nabla u,\tau)=\mathbb{D}u,~~k\in(0,\epsilon_0],\\
\mathrm{div} u=0,\\
u(x,0)=u_0(x),~~~\tau(0,x)=\tau_0(x),
\end{cases}
\end{equation}
and
\begin{equation}\label{th4-2}
\begin{cases}
\partial_tu+(u\cdot\nabla)u+\nabla p=0,\\
\partial_t\tau+(u\cdot\nabla)\tau-\Delta\tau+\tau+\mathrm{Q}(\nabla u,\tau)=\mathbb{D}u,\\
\mathrm{div} u=0,\\
u(x,0)=u_0(x),~~~\tau(0,x)=\tau_0(x),
\end{cases}
\end{equation}
To prove the instability when $k\rightarrow 0$, we first give the definition of the global stability:
$$\lim_{k\rightarrow 0}\|u^0-u^k\|_{L^{\infty}_t(B^{\frac{5}{2}}_{2,1})}
 +\|\tau^0-\tau^k\|_{L^{\infty}_T(B^{\frac{3}{2}}_{2,1})}=0,~~\forall (u_0,\tau_0)\in \mathbb{A} ~and~\forall t\in [0,\infty),$$
 where $\mathbb{A}:=\{(u_0,\tau_0)\in (B^{\frac{5}{2}}_{2,1} (\mathbb{R}^3),B^{\frac{3}{2}}_{2,1} (\mathbb{R}^3))|~\eqref{1}~ has~a~unique~solution~for~any~fixed~k\}.$
In order to prove the instability in large time, we should prove that
for any $k>0$~small enough, there exists a common initial sequence~ $(u_0,\tau_0)(k)$ and a $T(k)$ such that, when $t\geq T$, we have
\begin{align}\label{non1}
\|(u^0-u^a)(t)\|_{B^{\frac{5}{2}}_{2,1}}\geq \frac{\epsilon_0}{2},
\end{align}
Now, let an axisymmetric vector field  $\phi\in \mathbb{S}^3$ with $div\phi=0$. Set the initial data $$(u_0,\tau_0)(a)=k^{6}\epsilon_0 (\frac{\phi(k^4x)}{\|\phi\|_{L^2}}, 0).$$
For any $0< k\leq  \epsilon_0$, we have
$$\|u_0\|_{L^2}=\epsilon_0,~
~~and~~\|u_0\|_{B^1_{\infty,1}}
\leq \|u_0\|_{L^{\infty}}+\|u_0\|_{\dot{H}^3}\leq Ck^{6}\epsilon_0,~~w_0=curlu_0.$$
These satisfy the conditions in Theorem \ref{th1} and Theorem \ref{Th2}, which means $(u_0,\tau_0)\in\mathbb{A}$.

On one hand, by Theorem \ref{Th2}, \eqref{th4-1} has a unique global strong solution $(u^k,\tau^k)$ with the initial data $(u_0,\tau_0)(k)$~($\tau_0=0\nRightarrow\tau=0$). We also obtain the $L^2$ decay such that ($p=2$):
\begin{align}
 \|\nabla u^k(t)\|_{L^2}\leq C\|w^k(t)\|_{L^2}\leq Ce^{-\frac{k}{4}t}.
\end{align}
Moreover, since $u_0\in \mathbb{S}^3 \in \dot{H}^{-1}$, combining Lemma \ref{priori estimate} with \eqref{cyz11}, one can easily get that
\begin{align}\label{see2}
\|u\|_{L^{\infty}_{t}(\dot{H}^{-1})} \leq C(\|u_0\|_{\dot{H}^{-1}}+\epsilon_0)\leq C.
\end{align}
By interpolation inequality, we obtain that
\begin{align}\label{4.00}
 \|u^k(t)\|^2_{L^2}\leq C\|u^k(t)\|_{\dot{H}^{-1}}\|w^k(t)\|_{L^2}\leq Ce^{-\frac{k}{4}t}.
\end{align}

On the other hand, in \eqref{th4-2} ,
since $u_0$  is axisymmetric, by \cite{ax} one can easily obtain a unique global solution $(u^0,\tau^0)$ with the same initial data $(u_0,\tau_0)(k)$. Although the coefficients of \eqref{th4-2} are independent of $k$, one can still look for the initial data which is dependent on $k$. Then, using the first equation (the classical Euler equation) of \eqref{th4-1}, we have
\begin{align}\label{4.0}
 \|u^0(t)\|_{L^2}= \|u_0\|_{L^2}= \epsilon_0.
\end{align}

Therefore, there exists a $T=\frac{1}{k^3}$ such that when $t\geq T$, we have
 \begin{align}\label{4.1}
 \|u^0(t)-u^k(t)\|_{L^{2}}&\geq \|u^0(t)\|_{L^{2}}-\|u^k(t)\|_{L^{2}}\notag\\
 &\geq \epsilon_0-Ce^{-\frac{k}{8}t} \notag\\
 &\geq \epsilon_0-Ck^2 \notag\\
 &\geq \epsilon_0-C\epsilon_0^2 \notag\\
 &= \frac{\epsilon_0}{2},
\end{align}
where the second inequality is based on \eqref{4.00}. This implies \eqref{non1} (~${B^{\frac{5}{2}}_{2,1}}\hookrightarrow L^2$) and completes the proof Theorem \ref{th4}.

\par\noindent
\section*{Acknowledgments}
This work is partially supported by the National Natural Science Foundation of China (Nos. 11801574, 11971485), Natural Science Foundation of Hunan Province (No. 2019JJ50788), Central South University Innovation-Driven Project for Young Scholars (No. 2019CX022) and Fundamental Research Funds for the Central Universities of Central South University, China (Nos. 2020zzts038, 2021zzts0041).


\end{document}